\documentclass[12pt]{article}
\makeindex
\usepackage{amsfonts,amssymb,amsmath,amscd,amsthm,xcolor,mathtools,enumerate}

\usepackage{latexsym}
\usepackage{graphicx}
\usepackage[backref=false,colorlinks, linkcolor=blue, citecolor=red]{hyperref}

\newcommand{\Nrd}{{\rm Nrd}}
 
\newcommand{\disc}{{\rm disc}} 
\newcommand{\Disc}{{\rm Disc}} 

\newcommand{\ind}{{\rm ind}}

\newtheorem{theorem}{Theorem}[section]
\newtheorem{proposition}[theorem]{Proposition}  
\newtheorem{corollary}[theorem]{Corollary}  
 
\newtheorem{lemma}[theorem]{Lemma}

\numberwithin{equation}{section}

\title{ Rational equivalence on adjoint groups of type $^{1}D_n$ over field $\mathbb{Q}_P(X)$} 
\author{M. Archita} 
\date{ } 

\begin{document} 
	
	\maketitle 
	
	\begin{abstract}Let $F$ be the function field of a smooth, geometrically integral curve over a $p$-adic field with $p\neq 2.$ Let $G$ be a classical adjoint group of type $^1D_n$ defined over 
		$F$. We show that $G(F) / R$ is trivial, where $R$ denotes {\it rational equivalence} on $G(F)$. 
	\end{abstract}

	\section{Introduction} 
	Let $E$ be a field and G be an adjoint classical group. Let $G(E)$ be the group of $E$ rational points of G. In (\cite[Chapter II, \S$14$]{Ma}) Manin defined the notion of 
	{\it rational equivalence} on $G(E)$; in fact, it has been defined there more
	generally for a variety defined over a field. The set of equivalence 
	classes for this relation is denoted by $G(E) / R$ and has a natural group structure. It is a birational invariant of $G$ and the triviality of $G(E)/ R$ is closely related to the rationality of the underlying group variety $G$ over $E$. In fact, if 
	$G$ is $E$-rational then $G(E) / R$ is trivial 
	(see \cite{CTS}, \cite[\S$1$, Proposition $1$]{Me1}). 
	\medskip 
	
	Let $F$ be the function field of a smooth, geometrically integral curve over a $p$-adic field with $p\neq 2.$ Let $G$ be an absolutely simple adjoint classical group defined over $F.$ We are interested in the triviality of $G(F)/R.$ By the well known theorem of Weil, such a group $G$ is classified into types of $A_n,B_n,C_n,$ or $D_n$ ($D_4$ non-trialitarian), $n\geq 2$. We list below the known results in this direction:
	\begin{enumerate}
		\item If $G$ is of type $^1A_{n-1}, n\geq 2,$  $^2A_{n-1}$ with $n$ odd and $n\geq 3$, $B_n, n\geq 1,$ $G(F)/R$ is trivial due to Merkurjev. \cite{Me1}.)
		\item If $G$ is of type $C_n,$ $n=2,$ and $n$ odd  then $G(F)/R$ is trivial Merkurjev (see \cite{Me1}) and for other cases due to R. Preeti, and A. Soman. (see \cite{PS})
		
		\item If $G$ is of type $D_n,$ in \cite{PS}, it is shown that  $G(F)/R$ is trivial when the associated hermitian form $h$ has even rank, trivial discriminant, and trivial Clifford invariant.
		\item An example has been constructed to show $G(F)/R\neq 1,$ where $G$ is an adjoint group of type $^2D_3,$ and the associated hermitian form $h$ has even rank, non-trivial discriminant in \cite{PS}.
	\end{enumerate}	
	Let $(A,\sigma)$ be a central simple algebra with an orthogonal involution over $F$ associated to $G.$
	Further, we are interested in the behavior of $G(F)/R,$ when $(A,\sigma)$ has trivial discriminant, and non-trivial Clifford invariant. In this work, we establish the following:
	
	\begin{theorem}\label{Main Theorem}
		Let $F$ be the function field of a smooth, geometrically integral curve over a $p$-adic field with $p\neq 2.$ Let $G$ be a classical adjoint group of type $^1D_n$ ($D_4$ non-trialitarian) over $F.$ Then the group of rational equivalence classes over $F$ is trivial $i.e.$ $G(F)/R=(1).$
	\end{theorem}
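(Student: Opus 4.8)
The plan is to translate the statement into a computation with the underlying algebra-with-involution and then to identify $G(F)/R$ with an explicit cokernel of a norm-type map inside $F^\times$, which I will show is trivial using the arithmetic of $F$. Let $(A,\sigma)$ be the central simple $F$-algebra of degree $2n$ carrying the orthogonal involution associated to $G$, so that $G=\mathrm{PGO}^+(A,\sigma)$. That $G$ has type $^1D_n$ means $\sigma$ has trivial discriminant; equivalently the centre $Z$ of the Clifford algebra is split, $Z=F\times F$, and $C(A,\sigma)=C_+\times C_-$ decomposes into its two components. The Clifford invariant is then the pair $([C_+],[C_-])$ of classes in $\mathrm{Br}(F)$, which in our setting is allowed to be non-trivial, in contrast with the case treated in \cite{PS}.

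First I would set up the two exact sequences that control $G$: the multiplier sequence $1\to\mathbb{G}_m\to\mathrm{GO}^+(A,\sigma)\to G\to 1$, which already gives $G(F)=\mathrm{GO}^+(A,\sigma)(F)/F^\times$ because $H^1(F,\mathbb{G}_m)=0$, and the central isogeny $\mathrm{Spin}(A,\sigma)\to G$ together with the extended Clifford group. Feeding these into Merkurjev's description of $R$-equivalence on adjoint classical groups reduces the problem to a concrete statement about $F^\times$: namely that the subgroup of $F^\times$ generated by the squares $F^{\times 2}$, by the spinor norms of $\sigma$, and by the reduced norms $\Nrd(C_+^\times)$ and $\Nrd(C_-^\times)$ of the two Clifford components is all of $F^\times$. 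Here I would use that the simply connected cover $\mathrm{Spin}(A,\sigma)$ contributes nothing to $R$-equivalence over $F$, so that the whole of $G(F)/R$ is measured by this multiplicative cokernel.

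The heart of the proof is then arithmetic. Since $F$ is the function field of a smooth curve over a $p$-adic field with $p\neq2$, it has cohomological dimension $3$ and, by Parimala--Suresh, $u$-invariant $8$. Over such an $F$ I would prove the surjectivity inputs needed to collapse the cokernel: that the spinor-norm map attached to $\sigma$ is onto $F^\times/F^{\times 2}$, and that the reduced-norm maps $\Nrd\colon C_\pm^\times\to F^\times$ are surjective. Each of these I would obtain by passing to the completions of $F$ at its divisorial discrete valuations --- where the residue fields are local or global of a well-understood type --- and then patching local surjectivity into a global statement by means of the local--global principles of Harbater--Hartmann--Krashen and Colliot-Th\'el\`ene--Parimala--Suresh. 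The $u$-invariant bound guarantees isotropy, hence surjectivity of spinor norms, once the rank is large enough, and reduced-norm surjectivity for central simple algebras over $p$-adic function fields supplies the contribution of $C_\pm$. Combining these shows the generating subgroup exhausts $F^\times$, so the cokernel and therefore $G(F)/R$ is trivial.

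The main obstacle is exactly the non-triviality of the Clifford invariant, which is what separates this theorem from case (3) of the introduction. When $([C_+],[C_-])$ is trivial the components are split matrix algebras, $\Nrd(C_\pm^\times)=F^\times$ automatically, and the computation collapses at once; that is why the corresponding result of \cite{PS} is comparatively soft. In the present situation $C_+$ and $C_-$ are genuine non-split algebras, and the surjectivity of their reduced norms over $F$, together with its compatibility with the spinor-norm contribution, is a substantive arithmetic assertion. I expect the bulk of the work to consist in establishing this reduced-norm surjectivity uniformly in $n$, and in particular in disposing of the small-rank cases $n=2,3,4$, where $2n<9$ so that isotropy --- and hence surjectivity of the spinor norm --- is not forced by the $u$-invariant bound and must be argued separately.
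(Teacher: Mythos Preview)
Your proposed route has a genuine gap: the assertion that the reduced-norm maps $\Nrd\colon C_\pm^\times\to F^\times$ are surjective is false over a function field of a $p$-adic curve whenever $C_\pm$ is non-split. Over such an $F$ one has $cd(F)=3$, and the obstruction to $\lambda\in\Nrd(B^\times)$ for a central simple $F$-algebra $B$ of exponent $2$ is precisely the class $(\lambda)\cup[B]\in H^3(F,\mu_2)$; this class is typically nonzero when $[B]\neq 0$, so $\Nrd(B^\times)\subsetneq F^\times$. No amount of HHK or CTPS patching will repair this, because already at each divisorial completion $F_v$ (a two-dimensional local field) the reduced norm of a non-split algebra fails to be surjective. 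In particular the ``collapse of the cokernel'' you describe cannot be obtained this way, and the small-rank worries you flag are not the real obstruction.

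The paper proceeds quite differently. It uses Merkurjev's identification $\mathbf{PSim}_+(A,\sigma)(F)/R\simeq G_+(A,\sigma)/\mathrm{Hyp}(A,\sigma)\cdot F^{*2}$ (note: the numerator is $G_+(A,\sigma)$, not $F^\times$) and then, given a specific $\lambda\in G_+(A,\sigma)$, builds a finite tower $F\subset L\subset M$ that kills the Rost invariant $R(h)$ step by step. The point is that $H^3(F,\mu_2)$ consists of symbols (Theorem~\ref{symbol}), and for a $3$-fold Pfister form $q$ one has $F^*=\mathrm{Hyp}(q)$ (Proposition~\ref{pfister}); this lets one write $\lambda$ as a norm $N_{M/F}(\beta)$ with $R(h)_M=0$. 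Over $M$ the similarity relation forces $(\beta)\cup[C_+(h)_M]=0$, and only then does Corollary~\ref{nrd} give $\beta\in\Nrd(C_+(h)_M^\times)$ --- a conditional, not unconditional, reduced-norm statement. The last ingredient is the inclusion $\Nrd(C_+(h))\subseteq\mathrm{Hyp}(h)$ (Lemmas~\ref{Hyp-Nrd(C)} and~\ref{HE}), which is proved by passing to an extension where $C_+$ splits and invoking the previously known trivial-Clifford-invariant case (Theorem~\ref{non-split} from \cite{PS}); this bootstrap is exactly what bridges the gap between the non-trivial and trivial Clifford invariant situations, and it is absent from your outline.
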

	
	\section{Preliminaries}
	Let $E$ be a field of characteristic different from $2$. 
	For a separable closure $E_s$ of $E$, let 
	$\Gamma_{E}$ be the Galois group $Gal(E_{s}/E)$. 
	Let $p$ be a prime integer. Recall that the $p$-cohomological 
	dimension of $\Gamma_E$ is said to be $\leq r$ if 
	$H^n (\Gamma_E, A) = 0 $ for all $n > r$ and for all finite 
	$p$-primary $\Gamma_E$ modules $A$. This is denoted as 
	$cd_p (\Gamma_E) \leq r$. 
	The {\it cohomological dimension} of $E$, 
	denoted by $cd(E)$, is said to be $\leq r$, if 
	$cd_p (\Gamma_E) \leq r$ for all prime integers $p$. 
	\medskip

	In this paper we consider the base field $F$ be the function field of a smooth, geometrically integral curve over a $p$-adic field with $p\neq 2$ which satisfy 
	$cd (F) \leq 3$. Let $G$ be an absolutely simple classical adjoint group of type $^1D_n$ ($D_4$ non-trialitarian) over $F.$ Then by Weil's classification, (see \cite{We}) $G\cong\mathbf{PSim}_+(A,\sigma),$ where $(A,\sigma)$ is a central simple algebra of even degree with an orthogonal involution over $F.$ 
	\medskip

	Let $(A, \sigma)$ be a central simple algebra over $E$ with an orthogonal involution. So $A$ is a form for the matrix algebra, i.e., 
	if $E_s$ is a separable closure of $E$ then 
	$A \otimes_E E_s \cong M_r(E_s)$ for some natural number $r$. Further, 
	$\sigma$ being an orthogonal involution means 
	$\sigma \otimes E_s$ corresponds to a symmetric form on $M_r (E_s)$. 
	We refer to \cite[\S$1A$ and \S$2A$]{KMRT} for details. 
	As $A$ is a form for the matrix algebra over $E_s$, the dimension of $A$ over $E$ is a square. The positive square root of dimension $A$ over $E$ is called the {\it degree} 
	of $A$. 
	Recall that under the above isomorphism of $E$ algebras 
	$A \otimes_E E_s \cong M_r(E_s)$, the determinant map on $M_r(E_s)$ descends 
	to a map $Nrd: A \to E$ called the {\it reduced norm}. 
	By the Wedderburn structure theorems, 
	there exists a central {\it division} algebra $D$ over $E$ such that 
	$A \cong M_n(D)$. Further, there exists an orthogonal involution 
	$\tau$ on $D$ and a hermitian form $h$ over $(D, \tau)$ such that 
	$(A, \sigma) \cong (M_n(D), \tau_h)$, where $\tau_h$ denotes the adjoint 
	involution (see \cite[\S$1A$ and \S$4A$]{KMRT}) for details. A central simple 
	algebra with an orthogonal involution $(A, \sigma)$ over 
	$E$ is said to be {\it hyperbolic} if $A$ contains an 
	idempotent $e$ such that $\sigma (e) = 1-e$. When 
	characteristic $E \neq 2$, 
	this is equivalent to the existence of an isotropic ideal 
	$I \subset A$ of dimension $\dim_E I = \frac{1}{2} 
	\dim_E A$. 
	\medskip 

	Let $q$ be a quadratic form over $F$ and let $V$ be its underlying vector space. The dimension of $q$ is the dimension of the vector space $V$ over $F$. Let $b_q$ be the associated bilinear form to $q$. If $b_q$ is non-degenerate then $q$ is said to be {\it non-degenerate} or {\it regular}. In this paper, by a quadratic form we mean a non-degenerate quadratic form. A quadratic form $q$ of dimension $n$ is equivalent to a diagonal form $a_1 X_1^2 + a_2 X_2^2 + \cdots + a_n X_n^2$, 
for some $a_i \in F$. The diagonal form $\sum_{i =1}^n 
a_i X_i^2$ is denoted by $\langle a_1, \cdots , a_n \rangle$. 
A quadratic form $q$ is said to be {\it isotropic }, if there exists a non zero vector $v \in V$ such that $q(v) = 0$. 
\medskip 
	
	A quaternion $E$ algebra $\left ( \frac{a,b}{E} \right )$ is an $E$ algebra of degree $2$, generated by $i, j$ with the 
	relations: $i^2 = a$, $j^2 = b$ and $ij = -ji$, where 
	$a, b \in E^* = E - \{ 0 \}$. The quadratic form $\langle 
	1, -a, -b, ab \rangle$ is called the {\it norm} form of 
	$\left ( \frac{a,b}{E} \right )$. It is well known that 
	$\left ( \frac{a,b}{E} \right )$ is a division algebra if and only if the associated norm form is anisotropic (see 
	\cite[Chapter $3$]{L} for details). For any $a,b,c,d\in E^*,$ the quadratic form $\langle a,b,-ab,-c,-d,cd\rangle$ is an Albert form of the biquaternion algebra $\left ( \frac{a,b}{E} \right )\otimes \left ( \frac{c,d}{E} \right ).$

	\medskip 
	
	A central simple algebra $A$ over $F$ is said to be {\it split} if it is isomorphic to the matrix algebra over $F$. 
	\medskip 
	
	We next recall some Galois cohomological invariants of hermitian 
	forms over $(D, \tau)$, where 
	$D$ is a central division algebra over $E$ with an orthogonal involution. 
	We refer the reader to \cite[\S$2$]{BP1} and \cite[\S$3.4$]{BP2} for details. Let $(V,h)$ 
	be a non-degenerate hermitian form over $(D, \tau)$. 
	
	The $rank (h)$ is 
	defined as the dimension of the underlying $D$-vector space $V$. 
	Given a basis of the $D$-module $V$, the hermitian form $h$ is given by some 
	matrix $M(h)$ in this basis. Let $A = M_n (D)$ and $m^2 = dim_E (A)$. 
	For an even rank hermitian form $h$, 
	the {\it discriminant}, $\disc (h)$ of $(V,h)$ is defined as 
	$\disc (h) = (-1)^{m(m-1)/2} \, \cdot \Nrd_A (M(h)) \in E^* / E^{*2}$. 
	We also need a refinement of the $\disc $ map called the {\it Discriminant} and which is denoted by $\Disc$. Let $\Disc (h) = 
	(-1)^{m(m-1)/2}\,  \Nrd (M(h)) \in F^* / (\Nrd (D)^*)^2$. 
	
	\medskip 
	
	Let $(V,h)$ be a hermitian form of rank $2n$ and trivial discriminant. 
	Let $H_{2n}$ be a hyperbolic form of rank $2n$ over $(D, \tau)$. 
	Then the Clifford invariant $C(h) := 
	Cl_{H_{2n}} (h)=[C_+(M_{2n}(D),\tau_h)] \in {\ }_2 Br (E) / (D)$ 
	is the {\it relative Clifford invariant} $Cl_{H_{2n}} (h)$ as defined by Bartels 
	(see \cite{B}, \cite[\S$2$]{BP1} for details). 
	\medskip 
	
	We recall the Rost invariant from \cite[\S 31]{KMRT}. $$R(h)=(R(\zeta))\in H^3(F,\mu_2)/(F^*\cup(A)),$$
	where $R(\zeta)$ takes values in $H^3(F,\mu_4^{\otimes 2}).$
	\medskip

	

	
	Let $W(F)$ be the Witt ring of 
	$F$ (see \cite[Chapter $2$]{L} for details). 
	We denote by $I(F) \subset W(F)$ the {\it fundamental ideal} of even dimensional quadratic forms over $F$. The 
	$n$-th power of the ideal $I(F)$ is denoted by $I^n (F)$. 

	If $A$ is split then by definition $A \cong M_n (F)$, for some 
	positive integer $n$. If $\sigma$ is an orthogonal involution on $M_n (F)$ then $\sigma$ is the adjoint involution 
	$\sigma_q$, corresponding to a quadratic form $q$ over $F$ (\cite[Chapter $1$]{KMRT}). If $n$ is even then the above mentioned invariants of $(M_n(F), \sigma_q)$ match with the 
	classical invariants of the quadratic form $q$, namely, 
	the dimension of $q$, the discriminant $\disc (q)$ and the 
	usual Clifford invariant of $q$. For a quadratic form $q$ over $F$ we also have the notion of a Clifford algebra, $c(q)$ 
	(see \cite[Chapter $V$]{L}, 
	\cite[Chapter $9$]{Sc} for details). 
	\medskip
	
	Let $(A, \sigma)$ be a central simple algebra of degree $ 2m$ over $E$ with an 
	orthogonal involution. An element $a\in A^*$ is said to be a 
	{\it similitude} if $\sigma(a)a \in E^*$. 
	The similitudes of $(A, \sigma)$ form a group which is denoted by 
	$Sim(A, \sigma)$. We refer to 
	\cite[Chapter $3$]{KMRT} for the basics on similitudes. 
	The multiplier map $\mu: Sim(A, \sigma) \to E^*$ defined by
	$\mu(a) = \sigma(a)a$ is a group homomorphism. 
	The image of this map is denoted by $G(A, \sigma)$. 
	Let $\sigma$ be adjoint to a hermitian form $h$. Then $\lambda \in 
	G(A, \sigma)$ if and only if $\lambda \, \cdot h \cong h$. 
	\medskip 
	
	Let Hyp($A,\sigma$) be the subgroup of $F^*$ generated by the norms from
	all those finite field extensions of $F$, where the involution $\sigma$ becomes hyperbolic.
	\medskip
	
	Let ${\bf Sim} (A,\sigma)$ be the algebraic group whose $E$ rational points are given by $Sim(A, \sigma)$. 
	Let ${\bf Sim_{+}} (A, \sigma)$ be the connected component of identity of 
	${\bf Sim} (A,\sigma)$. 
	Let $Sim_+ (A, \sigma)$ be the $E$-rational points of ${\bf Sim_+} (A, \sigma)$. 
	The elements of $Sim_{+}(A, \sigma)$ are called 
	{\it proper similitudes}. The group $\mu(Sim_{+}(A, \sigma))$ is denoted by $G_{+}(A, \sigma)$. 
	\medskip 
	
	When $A$ is split $i.e.$ $A\cong M_{2m}(E).$ By Morita equivalence the involution $\sigma$ corresponds to a quadratic form $q$ of dimension $2m$ over $E.$ In the split case we set $G(q)=G_+(A,\sigma),$ and Hyp($q$)=Hyp($A,\sigma$). 
	\medskip
	
	The group ${\bf PSim} (A, \sigma)$ of projective similitudes has
	$PSim (A, \sigma) = Sim (A, \sigma) / E^*$ as its group of 
	$E$ rational points. 
	The connected component of identity of the group ${\bf PSim} (A, \sigma)$ 
	is denoted by ${\bf PSim_{+}} (A, \sigma)$. 
	\medskip 
	\section{Some known results}
	The following well known theorem due to 
	Merkurjev characterizes the 
	group of $R$-equivalence classes of the adjoint group 
	${\bf PSim}_+ (A, \sigma)$ in terms of the multipliers 
	of similitudes of $(A, \sigma)$. 
	\medskip

	\begin{theorem} (\cite[Theorem $1$]{Me1}) \label{merkurjev}
		With notation as above 
		$$ 
		{\bf PSim}_+ (A,\sigma)(F)/R \simeq  
		G_{+}(A,\sigma)/Hyp(A,\sigma) \cdot F^{*2} .  
		$$ 
	\end{theorem}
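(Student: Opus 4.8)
The plan is to construct, via the multiplier map, a surjection from $\mathbf{PSim}_+(A,\sigma)(F)$ onto $G_+(A,\sigma)/F^{*2}$ and then to prove that its composite with the projection onto $G_+(A,\sigma)/\mathrm{Hyp}(A,\sigma)\cdot F^{*2}$ has kernel exactly the $R$-trivial elements. First I would set up the map. The exact sequence of algebraic groups $1\to\mathbb{G}_m\to\mathbf{Sim}_+(A,\sigma)\to\mathbf{PSim}_+(A,\sigma)\to 1$ yields, by Hilbert $90$ (since $H^1(F,\mathbb{G}_m)=0$), the surjectivity of $Sim_+(A,\sigma)\to\mathbf{PSim}_+(A,\sigma)(F)$; thus every $F$-point lifts to a proper similitude. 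Because a central scalar $\lambda$ satisfies $\mu(\lambda)=\sigma(\lambda)\lambda=\lambda^2$, the multiplier descends to a well-defined group homomorphism $\bar\mu\colon\mathbf{PSim}_+(A,\sigma)(F)\to G_+(A,\sigma)/F^{*2}$, which is surjective by the definition $G_+(A,\sigma)=\mu(Sim_+(A,\sigma))$. It then remains to identify the kernel of the induced map modulo $\mathrm{Hyp}(A,\sigma)$.

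For the direction that $R$-trivial elements have multiplier in $\mathrm{Hyp}(A,\sigma)\cdot F^{*2}$, I would analyze a rational curve realizing the $R$-equivalence. Chaining reduces the problem to a single $F(t)$-point of $\mathbf{PSim}_+(A,\sigma)$ regular at $t=0$ and $t=\infty$ with specializations $1$ and $g$; lifting it over $F(t)$ to a proper similitude with multiplier $f\in F(t)^*$ gives the relation $f\cdot h_{F(t)}\cong h_{F(t)}$. Applying the second residue homomorphism on Witt groups of hermitian forms at each closed point $v$ of $\mathbb{P}^1_F$ and using that $h$ is unramified (being defined over $F$), I would deduce that at every $v$ where $f$ has odd valuation the residue form $\overline{f}_v\cdot h_{\kappa(v)}$ is hyperbolic, hence $\sigma$ becomes hyperbolic over $\kappa(v)$. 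The reciprocity (Faddeev) sequence for $F(t)^*/F(t)^{*2}$ on $\mathbb{P}^1_F$ then expresses $\bar\mu(g)=f(\infty)f(0)^{-1}$ modulo squares as a product of norms $N_{\kappa(v)/F}(\overline{f}_v)$ over precisely these points, each of which lies in $\mathrm{Hyp}(A,\sigma)$. Since $\bar\mu$ is a homomorphism and the $R$-trivial elements form the subgroup whose cosets are the $R$-classes, this shows that $\bar\mu$ factors through $\mathbf{PSim}_+(A,\sigma)(F)/R$ with values in $G_+(A,\sigma)/\mathrm{Hyp}(A,\sigma)\cdot F^{*2}$.

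The reverse implication, that $\bar\mu(g)\in\mathrm{Hyp}(A,\sigma)\cdot F^{*2}$ forces $g$ to be $R$-trivial, is where I expect the main obstacle, and I would split it into two constructions. First, any element of $\ker\bar\mu$ can be rescaled by a central scalar to have multiplier $1$, i.e.\ to an isometry in the special orthogonal kernel of $(A,\sigma)$; I would show its image in $\mathbf{PSim}_+(A,\sigma)$ is $R$-trivial by using that this group is generated by images of $F$-rational rank-one subgroups (root $\mathrm{SL}_2$'s together with Cayley/transvection parametrizations adapted to $(A,\sigma)$), each of which is $R$-trivial, so that their products are too. Second, for each generator $N_{L/F}(\ell)$ of $\mathrm{Hyp}(A,\sigma)$ with $\sigma_L$ hyperbolic, the hyperbolic structure over $L$ supplies an $\mathbb{A}^1$-family of similitudes attaining arbitrary multipliers and joining the relevant element to the identity; a corestriction/transfer argument then descends this to an $F$-rational curve whose similitude has multiplier $N_{L/F}(\ell)$. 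Combining the two constructions shows every element with multiplier in $\mathrm{Hyp}(A,\sigma)\cdot F^{*2}$ is $R$-trivial, which gives injectivity of the induced map on $R$-classes and hence the desired isomorphism.
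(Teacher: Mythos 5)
The paper offers no proof of this statement at all: it is quoted verbatim from Merkurjev (\cite[Theorem 1]{Me1}), so your attempt has to be measured against Merkurjev's original argument. Your skeleton does follow that argument in outline: the multiplier map $\bar\mu$ set up via Hilbert 90 on the sequence $1\to\mathbb{G}_m\to\mathbf{Sim}_+\to\mathbf{PSim}_+\to 1$ is exactly right, and your forward direction ($R$-trivial implies multiplier in $\mathrm{Hyp}(A,\sigma)\cdot F^{*2}$) via second residues of $\langle 1,-f\rangle\cdot h$ on $\mathbb{P}^1_F$, with hyperbolicity of $h_{\kappa(v)}$ at the odd-valuation points and an evaluation/Faddeev bookkeeping of $f(\infty)f(0)^{-1}$ as a product of norms, is essentially the standard and correct proof of that half.

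The reverse inclusion is where your proposal has genuine gaps. First, your claim that the multiplier-one part is handled because the group ``is generated by images of $F$-rational rank-one subgroups (root $\mathrm{SL}_2$'s)'' fails outright when $(A,\sigma)$ is anisotropic: there are then no unipotents and no $F$-rational root subgroups. What actually works (and what you gesture at with the word ``Cayley'') is that the Cayley map $s\mapsto(1-s)(1+s)^{-1}$ on $\sigma$-skew elements makes $\mathrm{O}^+(A,\sigma)$ a rational variety, so every proper isometry is a product of two points of a rationally parametrized dense open set and is hence $R$-trivial. Second, and more seriously, the step ``a corestriction/transfer argument then descends this to an $F$-rational curve'' does not exist as stated: there is no norm homomorphism $\mathrm{GO}^+(A_L,\sigma_L)\to\mathrm{GO}^+(A,\sigma)$ for these nonabelian groups and no transfer on $R$-equivalence classes, and this step is precisely Merkurjev's norm principle ($N_{L/F}$ of multipliers of $R$-trivial elements over $L$ are multipliers of $R$-trivial elements over $F$), the hardest ingredient of \cite{Me1}. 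A correct mechanism runs, e.g., as follows: write $L=F[t]/(p)$; since $h_L$ is hyperbolic, the form $\langle 1,-p(t)p(0)^{-1}\rangle\cdot h$ over $F(t)$ is unramified on $\mathbb{A}^1_F$ with hyperbolic residues, hence Witt-trivial by a Harder-type purity and specialization at $t=0$; this produces a similitude over $F(t)$, i.e.\ a rational curve through the identity whose specializations have multipliers $N_{L/F}(c-\theta)$ modulo squares, and a further argument is still needed to capture all of $N_{L/F}(L^*)$. Since you assert the conclusion of this norm principle without any such mechanism, the injectivity half of the isomorphism is not proved.
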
 
	We note down the next proposition, for a $3$-fold Pfister form over $F.$
	\begin{proposition}(\cite[Proposition 5.1]{PS})\label{pfister}
		Let $F$ be the function field of a smooth, geometrically integral curve over a $p$-adic field with $p\neq 2.$ Let $q$ be a 3-fold Pfister form over $F.$ Then $F^*=Hyp(q)=G(q).$
	\end{proposition}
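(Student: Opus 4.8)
The plan is to establish the chain $Hyp(q)\subseteq G(q)\subseteq F^{*}$ and then the reverse inclusion $F^{*}\subseteq Hyp(q)$; together these force $F^{*}=Hyp(q)=G(q)$. Throughout I write $\langle\langle a\rangle\rangle:=\langle 1,-a\rangle$ and present the $3$-fold Pfister form as $q=\langle\langle a\rangle\rangle\otimes\langle\langle b\rangle\rangle\otimes\langle\langle c\rangle\rangle$. The two facts that drive the argument are the vanishing $I^{4}(F)=0$, which follows from the standing hypothesis $cd(F)\le 3$, and the roundness of Pfister forms, i.e. $G(q)=D(q)$, where $D(q)$ denotes the group of nonzero values represented by $q$.

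First I would record $I^{4}(F)=0$. Since $cd_{2}(F)\le 3$ we have $H^{n}(F,\mathbb{Z}/2)=0$ for all $n\ge 4$; by the Milnor conjecture $I^{n}(F)/I^{n+1}(F)\cong H^{n}(F,\mathbb{Z}/2)$, so $I^{n}(F)=I^{n+1}(F)$ for $n\ge 4$, and the Arason--Pfister Hauptsatz $\bigcap_{m}I^{m}(F)=0$ then gives $I^{4}(F)=0$. In particular every $4$-fold Pfister form over $F$ is hyperbolic. Applying this to $\langle\langle\lambda\rangle\rangle\otimes q=q\perp(-\lambda)q$ for an arbitrary $\lambda\in F^{*}$ shows that this form is hyperbolic, i.e. $\lambda q\cong q$, so $\lambda\in G(q)$. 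Hence $G(q)=F^{*}$.

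It remains to prove $Hyp(q)=G(q)$, which I would establish over an arbitrary base field. The inclusion $Hyp(q)\subseteq G(q)$ is Scharlau's norm principle: if $q$ becomes hyperbolic over a finite extension $L/F$ then $G(q_{L})=L^{*}$ and $N_{L/F}(L^{*})=N_{L/F}(G(q_{L}))\subseteq G(q)$. For the inclusion $G(q)\subseteq Hyp(q)$ I would induct on the number of slots, writing $q=\langle\langle a_{1}\rangle\rangle\otimes q_{1}$ with $q_{1}$ a Pfister form of one lower fold, so that $q=q_{1}\perp(-a_{1})q_{1}$. Using roundness, a typical value $\lambda\in D(q)=G(q)$ factors as $\lambda=s\,(1-a_{1}r)$ with $s\in D(q_{1})=G(q_{1})$ and $r\in D(q_{1})$. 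By induction $s\in Hyp(q_{1})$, and since any splitting field of $q_{1}$ also splits $q$ (tensoring a hyperbolic form with $\langle\langle a_{1}\rangle\rangle$ remains hyperbolic), we get $Hyp(q_{1})\subseteq Hyp(q)$, whence $s\in Hyp(q)$. The second factor $1-a_{1}r$ is a value of $\langle 1,-a_{1}r\rangle$, hence a norm from $F(\sqrt{a_{1}r})$; moreover, because $r\in G(q_{1})$ the form $\langle\langle r\rangle\rangle\otimes q_{1}=q_{1}\perp(-r)q_{1}$ is already hyperbolic over $F$, and over $F(\sqrt{a_{1}r})$ one has $a_{1}\equiv r$ modulo squares, so $q_{F(\sqrt{a_{1}r})}\cong(\langle\langle r\rangle\rangle\otimes q_{1})_{F(\sqrt{a_{1}r})}$ is hyperbolic. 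Thus $F(\sqrt{a_{1}r})$ splits $q$ and $1-a_{1}r\in Hyp(q)$, giving $\lambda\in Hyp(q)$. The degenerate sub-cases are covered by $-a_{1}\in Hyp(q)$, which comes from the splitting field $F(\sqrt{a_{1}})$, and the base case $q=\langle\langle a_{1}\rangle\rangle$ is the direct identification $D\langle 1,-a_{1}\rangle=N_{F(\sqrt{a_{1}})/F}(F(\sqrt{a_{1}})^{*})\subseteq Hyp(\langle\langle a_{1}\rangle\rangle)$.

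Putting the pieces together yields $F^{*}=G(q)=Hyp(q)$. I expect the main obstacle to be the inclusion $G(q)\subseteq Hyp(q)$, namely realizing every similitude multiplier as a norm coming from a field over which $q$ splits; the resolution is the factorization $\lambda=s\,(1-a_{1}r)$, whose point is that the element $a_{1}r$ is manufactured precisely so that $F(\sqrt{a_{1}r})$ is a splitting field of $q$ while $1-a_{1}r$ is visibly a norm from it. Note that the input $I^{4}(F)=0$ is needed only to obtain $G(q)=F^{*}$; the identity $Hyp(q)=G(q)$ is a field-independent feature of Pfister forms.
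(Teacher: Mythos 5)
Your proof is correct, but note that this paper contains no proof of the proposition to compare against: it is imported verbatim from \cite[Proposition 5.1]{PS}, so the comparison is really with the cited source. Your argument splits into the two standard pillars and both are sound. For $G(q)=F^{*}$ you derive $I^{4}(F)=0$ from $cd_{2}(F)\le 3$ via the Milnor conjecture plus Arason--Pfister; this works, but it is heavier machinery than the setting requires, since the paper's own quoted toolkit gives it immediately: by Theorem \ref{rank} (equivalently $u(F)=8$, \cite[Theorem 4.6]{VP1}, which this paper invokes in the proof of Lemma \ref{HE}) every form of dimension at least $11$ over $F$ is isotropic, so the $16$-dimensional Pfister form $\langle 1,-\lambda\rangle\otimes q$ is isotropic, hence hyperbolic, and $\lambda\in G(q)$ directly --- this elementary route, resting on the Parimala--Suresh isotropy results rather than on Voevodsky, is the one underlying \cite{PS}. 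For $Hyp(q)=G(q)$ you give the standard field-independent argument: Scharlau's norm principle for $Hyp(q)\subseteq G(q)$, roundness $G(q)=D(q)$, and the slot induction with the factorization $\lambda=s(1-a_{1}r)$, $s,r\in D(q_{1})$, where the whole point --- correctly identified --- is that $F(\sqrt{a_{1}r})$ hyperbolizes $q$ because $r\in G(q_{1})$ makes $\langle 1,-r\rangle\otimes q_{1}$ hyperbolic over $F$, while $1-a_{1}r$ is visibly a norm from $F(\sqrt{a_{1}r})$. Your handling of the degenerate cases ($s=0$ via $-a_{1}\in Hyp(q)$, and the base case) is complete; the only cosmetic gaps are that you should note $1-a_{1}r\neq 0$ (it equals $\lambda s^{-1}$, and $\lambda\neq 0$) and that when $a_{1}r$ is a square $q$ is already hyperbolic over $F$, so the trivial extension $L=F$ yields $Hyp(q)=F^{*}$ outright.
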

	\begin{corollary}(\cite[Corollary 4.4]{PS})\label{nrd}
	Let $F$ be the function field of a smooth, geometrically integral curve over a $p$-adic field with $p\neq 2.$ Let $A$ be a central simple algebra over $F$ of index $\leq 4.$ For any $\lambda\in F^*,$ if $(\lambda)\cup (A)=0\in H^3(F,\mu_4^{\otimes 2})$ then $\lambda\in \Nrd(A^*).$
	\end{corollary}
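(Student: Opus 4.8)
The plan is to reduce to the underlying division algebra and then treat the possible indices $1,2,4$ separately, the index-$4$ case being the heart of the matter. Write $A\cong M_r(D)$ with $D$ the central division algebra in the Brauer class of $A$; since reduced norms are a Morita invariant, $\Nrd(A^*)=\Nrd(D^*)$, and $(A)=(D)$. Thus it suffices to prove that if $(\lambda)\cup(D)=0$ then $\lambda\in\Nrd(D^*)$, where $\deg D=\ind D\in\{1,2,4\}$. (Only this implication is needed; the reverse inclusion is the easy direction, following from the fact that every reduced norm is a norm $N_{L/F}(\ell)$ from a maximal subfield $L\subseteq D$, which splits $D$, together with the projection formula.)

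The single cohomological input I would isolate at the outset is that $cd_2(F)\le 3$ forces $I^4(F)=0$ and makes the Arason invariant $e_3\colon I^3(F)\xrightarrow{\ \sim\ }H^3(F,\mu_2)$ an isomorphism: by the Milnor conjecture $e_n\colon I^n(F)/I^{n+1}(F)\cong H^n(F,\mu_2)$, for $n=4$ the target vanishes, so $I^4(F)=I^5(F)=\cdots$, and the Arason--Pfister Hauptsatz gives $\bigcap_n I^n(F)=0$, whence $I^4(F)=0$ and $e_3$ is an isomorphism. The index-$1$ case is trivial. For index $2$, $D=(a,b)$ and $(\lambda)\cup(D)$ is the Arason invariant of the $3$-fold Pfister form $\langle\langle a,b,\lambda\rangle\rangle$; by the isomorphism $e_3$ its vanishing is equivalent to $\langle\langle a,b,\lambda\rangle\rangle$ being hyperbolic, i.e. to $\langle\langle a,b\rangle\rangle\cong\lambda\langle\langle a,b\rangle\rangle$. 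Since the norm form of $D$ is the round $2$-fold Pfister form $\langle\langle a,b\rangle\rangle$, this says exactly that $\lambda\in G_F(\langle\langle a,b\rangle\rangle)=D_F(\langle\langle a,b\rangle\rangle)=\Nrd(D^*)$.

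The main case is $\ind D=4$ with $D$ of exponent $2$, i.e. a biquaternion algebra $D=Q_1\otimes Q_2$ with Albert form $\phi=\langle a_1,b_1,-a_1b_1,-a_2,-b_2,a_2b_2\rangle$. Here $\phi\in I^2(F)$ with Clifford invariant $c(\phi)=(D)$, so $\langle\langle\lambda\rangle\rangle\otimes\phi\in I^3(F)$ has Arason invariant $e_3(\langle\langle\lambda\rangle\rangle\otimes\phi)=(\lambda)\cup(D)$. Using $I^4(F)=0$ and the isomorphism $e_3$ once more, the hypothesis $(\lambda)\cup(D)=0$ is equivalent to $\langle\langle\lambda\rangle\rangle\otimes\phi=0$ in $W(F)$, i.e. to $\phi\cong\lambda\phi$, i.e. to $\lambda\in G_F(\phi)$. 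The proof is then completed by the identification
$$G_F(\phi)=\Nrd(D^*),$$
which I would read off from the exceptional isomorphism $A_3=D_3$: a degree-$4$ central simple algebra corresponds to the $6$-dimensional quadratic space $(\wedge^2 D,\phi)$, under which the multiplier of a similitude of $\phi$ is the reduced norm of the corresponding element of $D^*$, so the group of similarity factors of $\phi$ coincides with $\Nrd(D^*)$.

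Two points remain and constitute the main obstacle. First, one must pin down $G_F(\phi)=\Nrd(D^*)$ as a genuine equality (not merely modulo squares) and verify that the comparison map $H^3(F,\mu_2^{\otimes 2})\to H^3(F,\mu_4^{\otimes 2})$ is injective on the relevant classes, so that the hypothesis as stated (vanishing in $H^3(F,\mu_4^{\otimes 2})$) indeed yields vanishing of the $\mu_2$-class used above; both are standard but need care. Second, and harder, is the index-$4$, exponent-$4$ case, where $D$ is not biquaternion and the clean Albert-form picture is unavailable. Here I would invoke that over the function field of a $p$-adic curve such a $D$ is cyclic (Saltman), say $D=(K/F,\sigma,b)$, and combine the description $\Nrd(D^*)=N_{K/F}(K^*)\cdot\langle b\rangle$ with a local--global analysis of the condition $(\lambda)\cup(D)=0$; alternatively one descends the biquaternion result along a quadratic extension $K/F$ over which $D$ acquires exponent $2$ and transfers back by a corestriction argument. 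Controlling this exponent-$4$ case is where the real work lies.
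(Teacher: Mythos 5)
The paper itself gives no proof of this corollary: it is imported verbatim from \cite[Corollary 4.4]{PS}, so your attempt can only be measured against what that statement actually requires. Your treatment of the cases of index $1$, index $2$, and index $4$ with exponent $2$ is essentially sound and standard: $cd_2(F)\le 3$ kills $H^4(F,\mathbb{Z}/2)$, so by Milnor/Voevodsky and Arason--Pfister $I^4(F)=0$ and $e_3$ is an isomorphism; the quaternion case reduces to roundness of the norm form; and in the biquaternion case the identification of the similarity factors of an Albert form with $\Nrd(D^*)$ is indeed available (\cite[\S 16]{KMRT}). Your first flagged worry is also resolvable in a routine way: since the Galois action on $\mu_4$ is by $\chi$ with $\chi^2=1$, one has $\mu_4^{\otimes 2}\cong\mathbb{Z}/4$ with trivial action, the connecting map $H^2(F,\mathbb{Z}/2)\to H^3(F,\mathbb{Z}/2)$ is the Bockstein, which is a derivation vanishing on symbols (as $(a)\cup(a)=(-1)\cup(a)$), hence vanishes on all of $H^2$ by Merkurjev's theorem; so $H^3(F,\mu_2)\to H^3(F,\mu_4^{\otimes 2})$ is injective and the hypothesis does yield the $\mu_2$-vanishing you use. (Your silent omission of index $3$ is harmless: for $(\lambda)\cup(A)$ to live in $H^3(F,\mu_4^{\otimes 2})$ the exponent must divide $4$, which rules out index $3$.)

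The genuine gap is the one you name yourself: index $4$, exponent $4$. This case is not vacuous --- e.g.\ the constant algebra obtained by pulling back a division algebra over $\mathbb{Q}_p$ with Hasse invariant $1/4$ has index and exponent $4$ over $F$ --- so the corollary as stated is simply not proved by your argument. Moreover, neither of your proposed rescues works as described. Saltman's cyclicity theorem for algebras over $p$-adic curves is a prime-degree result; cyclicity of degree-$4$, exponent-$4$ division algebras over such $F$ is not an off-the-shelf citation, and even granting cyclicity, the asserted local--global analysis of $(\lambda)\cup(A)=0$ is exactly the unproved content. The corestriction route fails structurally: if $K/F$ is quadratic with $A_K$ of exponent $2$, the exponent-$2$ case gives $\lambda\in\Nrd(A_K^*)$, but transferring by the norm only shows $N_{K/F}(\lambda)=\lambda^2\in\Nrd(A^*)$, which is no information since $F^{*2}$-multiples are not the issue; there is no mechanism to descend $\lambda$ itself. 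This missing case is precisely where the cited source does its real work (via Rost-kernel results over fields of cohomological dimension $\le 3$, cf.\ \cite{P}), so the proposal, while correct on the exponent-$2$ part, does not amount to a proof of the corollary.
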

	The following theorem is noted for a split algebra over $F$: 
	\begin{theorem}(\cite[Theorem 7.1]{PS}) \label{split}
		Let $F$ be the function field of a smooth, geometrically integral curve over a $p$-adic field with $p\neq 2.$ Let $q$ be a quadratic form over $F$ of even dimension and trivial discriminant. Then $G(q)=Hyp(q)\cdot F^{*2}.$
		
	\end{theorem}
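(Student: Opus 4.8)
The plan is to prove the two inclusions $\mathrm{Hyp}(q)\cdot F^{*2}\subseteq G(q)$ and $G(q)\subseteq\mathrm{Hyp}(q)\cdot F^{*2}$ separately, and to arrange matters so that the proper/improper distinction never intervenes. For the first inclusion, $F^{*2}\subseteq G(q)$ since $a\mapsto a\cdot\mathrm{Id}$ is a proper similitude with multiplier $a^2$, and $\mathrm{Hyp}(q)\subseteq G(q)$ is the standard transfer fact that norms from a finite extension over which $q$ becomes hyperbolic are proper multipliers. For the reverse inclusion I would instead prove the a priori stronger statement that every $\lambda\in F^*$ with $\lambda q\cong q$ lies in $\mathrm{Hyp}(q)\cdot F^{*2}$. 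Since $\{\lambda:\lambda q\cong q\}\supseteq G(q)\supseteq\mathrm{Hyp}(q)\cdot F^{*2}$, this sandwiches all three groups together and yields the theorem without tracking whether a given similitude is proper.

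So fix $\lambda$ with $\lambda q\cong q$, equivalently $\langle\langle\lambda\rangle\rangle\otimes q=0$ in $W(F)$. As $q$ has even dimension and trivial discriminant, $q\in I^2(F)$, and $cd(F)\le 3$ forces $H^4(F,\mu_2)=0$, hence $I^4(F)=0$ (using $I^n/I^{n+1}\cong H^n(F,\mu_2)$ and the Arason--Pfister Hauptsatz). The central simplification is a Witt-class decomposition $q\sim q_0\perp\pi$ separating the Clifford invariant from the rest: let $A$ be a central simple algebra with $[A]=C(q)\in{}_2\mathrm{Br}(F)$, which has $\mathrm{ind}(A)\le 4$ by Saltman's period--index bound (a period-$2$ class over a function field of a $p$-adic curve has index dividing $4$); take $q_0$ to be an Albert form of $A$ when $\mathrm{ind}(A)=4$, a $2$-fold Pfister norm form when $\mathrm{ind}(A)=2$, and $q_0=0$ when $A$ splits. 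Then $q-q_0$ has trivial discriminant and Clifford invariant, so $q-q_0\in I^3(F)$; since $u(F)=8$, its anisotropic part $\pi$ has dimension $0$ or $8$, and an anisotropic $8$-dimensional form in $I^3(F)$ over such a field is a $3$-fold Pfister form. Now $\langle\langle\lambda\rangle\rangle\otimes\pi\in I^4(F)=0$, so $\langle\langle\lambda\rangle\rangle\otimes q=\langle\langle\lambda\rangle\rangle\otimes q_0$ in $W(F)$; thus $\lambda q\cong q$ if and only if $\lambda q_0\cong q_0$. The whole question therefore reduces to the Clifford-level form $q_0$ of dimension $\le 6$, the Pfister part $\pi$ being invisible to the multiplier condition.

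To treat $q_0$ I would feed $\langle\langle\lambda\rangle\rangle\otimes q_0=0$ through $e_3\colon I^3(F)\xrightarrow{\sim}H^3(F,\mu_2)$ (an isomorphism since $I^4(F)=0$) to obtain $(\lambda)\cup[A]=0$, which maps to $0$ in $H^3(F,\mu_4^{\otimes 2})$, and then apply Corollary \ref{nrd} (legitimate as $\mathrm{ind}(A)\le 4$) to conclude $\lambda\in\Nrd(A^*)$. One then checks $\Nrd(A^*)\subseteq\mathrm{Hyp}(q_0)\cdot F^{*2}$ by writing $\lambda=\Nrd(x)$ and examining $F(x)\subseteq A$: when $[F(x):F]=\deg A$ the field $F(x)$ splits $A$ and hence makes $q_0$ hyperbolic, so $\lambda=N_{F(x)/F}(x)\in\mathrm{Hyp}(q_0)$, whereas the proper subfield cases force $\Nrd(x)\in F^{*2}$. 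For the quaternion and split cases this is the classical roundness of Pfister forms together with the fact that quadratic subfields of a quaternion algebra split it.

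The remaining, and I expect the most delicate, step is to upgrade membership in $\mathrm{Hyp}(q_0)\cdot F^{*2}$ to membership in $\mathrm{Hyp}(q)\cdot F^{*2}$, that is, to absorb the $3$-fold Pfister part $\pi$. Here I would argue in stages: writing $\lambda$, modulo squares, as a product of norms $N_{L/F}(y)$ with $q_0$ hyperbolic over the finite extension $L/F$, I note that over $L$ one has $q_L\sim\pi_L$, a $3$-fold Pfister form, and that $L$ is again the function field of a curve over a $p$-adic field, so Proposition \ref{pfister} applies over $L$ and gives $\mathrm{Hyp}(\pi_L)=L^*$. Hence each $y$ is itself a product of norms from extensions $L'/L$ over which $\pi_L$, and therefore $q$, becomes hyperbolic; composing norms $N_{L'/F}=N_{L/F}\circ N_{L'/L}$ places $\lambda$ in $\mathrm{Hyp}(q)\cdot F^{*2}$. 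The crux is the interplay of the two invariants: Corollary \ref{nrd} disposes of the Clifford class through reduced norms, while the unusually strong Proposition \ref{pfister} ($\mathrm{Hyp}=F^*$ for $3$-fold Pfister forms over such fields) lets the residual $I^3$-part be swept into $\mathrm{Hyp}(q)$ by a transfer in stages. The main obstacle is making this staged transfer and the reduced-norm/splitting-field identification fit together uniformly across all three index cases.
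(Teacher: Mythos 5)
Your proposal is essentially correct, but note that the paper itself contains no proof of this statement: it is imported wholesale from \cite[Theorem 7.1]{PS}, so the only internal comparison available is with the toolkit the paper records and with its non-split analogue (the proof of Theorem \ref{even}). Measured against that, your argument is a faithful reconstruction of the standard strategy: translate $\lambda q\cong q$ into $\langle\langle\lambda\rangle\rangle\otimes q=0$ in $W(F)$, split the Witt class as $q\sim q_0\perp\pi$ with $q_0$ (Albert form, quaternion norm form, or zero, using the period--index bound of Theorem \ref{ind}) carrying the Clifford invariant and $\pi\in I^3(F)$, kill $\langle\langle\lambda\rangle\rangle\otimes\pi$ via $I^4(F)=0$, extract $\lambda\in\Nrd$ from $(\lambda)\cup[A]=0$ via Corollary \ref{nrd}, pass to $\mathrm{Hyp}(q_0)\cdot F^{*2}$ through the subfield analysis, and absorb $\pi$ by the staged transfer through Proposition \ref{pfister} --- exactly the ingredients (Theorem \ref{ind}, Corollary \ref{nrd}, Proposition \ref{pfister}, $u(F)=8$) that this paper and \cite{PS} deploy. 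Two minor repairs are needed, neither fatal. First, an anisotropic $8$-dimensional form in $I^3(F)$ is only \emph{similar} to a $3$-fold Pfister form, not necessarily equal to one; this is harmless because hyperbolicity over an extension, hence $\mathrm{Hyp}$, is invariant under scaling, so Proposition \ref{pfister} applied to the associated Pfister form still gives $\mathrm{Hyp}(\pi_L)=L^*$. Second, your use of Proposition \ref{pfister} over the intermediate field $L$ requires that a finite extension of $F$ is again the function field of a smooth, geometrically integral curve over a $p$-adic field (namely over the algebraic closure of the original $p$-adic base in $L$); you flagged this, and it is true, but it deserves an explicit sentence since the whole staged-transfer mechanism rests on it. With those two points made precise, the sandwich $\{\lambda:\lambda q\cong q\}\subseteq \mathrm{Hyp}(q)\cdot F^{*2}\subseteq G(q)$ correctly sidesteps the proper/improper issue, and the subfield computation $\Nrd(x)=N_{F(x)/F}(x)^{\deg A/[F(x):F]}$ does exactly what you assert in each index case.
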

	For a non-split central simple algebra $A$ with an orthogonal involution over $F,$ we note down the following theorem:
	\begin{theorem}(\cite[Theorem 7.2]{PS}) \label{non-split}
		Let $F$ be the function field of a smooth, geometrically integral curve over a $p$-adic field with $p\neq 2.$ Let $(A,\sigma)$ be a central simple algebra over $F$ with an orthogonal involution. Let $h$ be a hermitian form over $(A,\sigma)$ of even rank $2n,$ trivial discriminant and trivial Clifford invariant. Then $$G_+(h)=Hyp(h)=F^{*}.$$
		
	\end{theorem}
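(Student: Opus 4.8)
The plan is to prove the single equality $\mathrm{Hyp}(h)=F^*$; since one always has the inclusions $\mathrm{Hyp}(h)\subseteq G_+(h)\subseteq F^*$ (the first because a norm from a field over which $\sigma$ becomes hyperbolic is a multiplier of a proper similitude, a norm principle implicit in Theorem \ref{merkurjev}, and $F^{*2}\subseteq G_+(h)$ trivially), this forces $G_+(h)=\mathrm{Hyp}(h)=F^*$ at once. Over $F$ the algebra $A$ carries an orthogonal involution, so $A$ has exponent $2$ in $\mathrm{Br}(F)$; for the function field of a $p$-adic curve the period-index bound gives $\ind A\in\{1,2,4\}$, and I would organize the argument according to whether $D$ is $F$, a quaternion division algebra, or a biquaternion division algebra. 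Throughout, the decisive structural fact is that $cd(F)\le 3$ forces $I^4(F)=0$ (and its hermitian analogue), so that a form sitting in the third layer of the relevant filtration is pinned down by a single degree-$3$ invariant.

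In the split case Morita equivalence replaces $(A,\sigma)$ by a quadratic form $q$ of even dimension, and the hypotheses say exactly that $q\in I^2(F)$ (trivial discriminant) with trivial Clifford invariant, hence $q\in I^3(F)$. For $\lambda\in F^*$ the scaled form $\langle 1,-\lambda\rangle\otimes q$ again lies in $I^3(F)$, and its Arason invariant is $(\lambda)\cup c(q)=(\lambda)\cup 0=0$; since $I^4(F)=0$ this form is hyperbolic, i.e. $\lambda q\cong q$, so $\lambda\in G_+(q)$ and $G_+(q)=F^*$. Feeding $G_+(q)=F^*$ into Theorem \ref{split} gives $\mathrm{Hyp}(q)\cdot F^{*2}=F^*$, and a further argument, using Proposition \ref{pfister} to control the hyperbolic norms of the $3$-fold Pfister forms attached to the Witt class of $q$ (which is a sum of such forms, as $I^3(F)$ is additively generated by them), removes the factor $F^{*2}$ to yield $\mathrm{Hyp}(q)=F^*$.

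The non-split case is where the real work lies, and it is the step I expect to be the main obstacle. Here I would run the same strategy at the level of hermitian forms over $(D,\tau)$: for $\lambda\in F^*$ the form $\langle 1,-\lambda\rangle\otimes h$ has trivial discriminant and trivial Clifford invariant (being of the same $I^3$-type), while its Rost invariant should equal $(\lambda)\cup C(h)$ in $H^3(F,\mu_2)/(F^*\cup(A))$, which vanishes because $C(h)=0$. The crux is then an Arason--Pfister type statement for hermitian forms over $(D,\tau)$ when $cd(F)\le 3$: simultaneous vanishing of the discriminant, the Clifford invariant and the Rost invariant must force $\langle 1,-\lambda\rangle\otimes h$ to be hyperbolic, giving $\lambda\in G_+(h)$. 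Making this precise is delicate because the discriminant now lives in $F^*/(\Nrd(D)^*)^2$ and the Clifford invariant in ${}_2\mathrm{Br}(F)/(D)$, so the reduced-norm bookkeeping must be controlled; this is exactly where $\ind A\le 4$ and Corollary \ref{nrd} enter, letting one realise the relevant classes as reduced norms, and the biquaternion subcase $\ind A=4$ is the most demanding. Once hyperbolicity of $\langle 1,-\lambda\rangle\otimes h$ is established, the same Pfister-form input (Proposition \ref{pfister}) applied to the $3$-fold Pfister pieces underlying $h$ upgrades $G_+(h)=F^*$ to $\mathrm{Hyp}(h)=F^*$, completing the proof.
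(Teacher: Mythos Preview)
The statement you are attempting to prove is not proved in this paper. Theorem~\ref{non-split} appears in Section~3 under ``Some known results'' and is simply quoted from \cite[Theorem~7.2]{PS}; the present paper uses it only as an input, in the proofs of Lemma~\ref{Hyp-Nrd(C)} and Lemma~\ref{HE}. There is therefore no argument here for you to compare your proposal against.

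On the substance of your sketch: the overall strategy---show that $\langle 1,-\lambda\rangle\otimes h$ has trivial discriminant, Clifford invariant and Rost invariant, then invoke a Hauptsatz-type classification for hermitian forms over fields with $cd_2\le 3$ to deduce hyperbolicity and hence $\lambda\in G_+(h)$---is indeed the line taken in \cite{PS}, relying on the classification results of \cite{P}. You rightly flag that classification step as the crux, and it is not something you can extract from the results quoted in this paper. The weakest link in your outline is the passage from $G_+(h)=F^*$ to $\mathrm{Hyp}(h)=F^*$: writing a general $q\in I^3(F)$ as a Witt sum of $3$-fold Pfister forms and applying Proposition~\ref{pfister} to each summand does not by itself control $\mathrm{Hyp}(q)$, since an extension that makes one Pfister summand hyperbolic need not kill the others. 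In \cite{PS} this step is handled more directly, so if you pursue a reconstruction you should revisit that reduction rather than rely on the Pfister decomposition.
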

	
	The three results recorded below are over a function field in one variable over a non-dyadic $p$-adic field.
	\begin{theorem}(\cite[Corollary 2.2]{VP}) \label{ind}	Let $k$ be a non-dyadic $p$-adic field and $K$ a function field in one variable over $k.$ 
		Let $D$ be a central division algebra over $K$ of exponent 2 in the	Brauer group of $K.$ Then the degree of $D$ is atmost 4.
	\end{theorem}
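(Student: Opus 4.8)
The plan is to bound the index of $D$ directly, using that for a division algebra the degree coincides with the index; since $D$ has exponent (period) $2$ and $k$ is non-dyadic, this period is prime to the residue characteristic $p$, so the whole argument lives in the tame setting. It therefore suffices to show that every class in ${}_2\mathrm{Br}(K)$ has index dividing $4$, which gives $\deg D=\operatorname{ind}(D)\in\{2,4\}$.

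First I would fix a regular, proper, flat model $\mathcal{X}\to\operatorname{Spec}\mathcal{O}_k$, a two-dimensional regular arithmetic surface with function field $K$; such models exist by resolution of singularities for arithmetic surfaces. For $\alpha=[D]\in{}_2\mathrm{Br}(K)$ I would study the residues $\partial_x\colon{}_2\mathrm{Br}(K)\to H^1(\kappa(x),\mathbb{Z}/2)=\kappa(x)^*/\kappa(x)^{*2}$ at the codimension-one points $x$ of $\mathcal{X}$, and, after finitely many blow-ups, arrange that the ramification locus together with the special fibre forms a normal crossings divisor. Because $p\neq 2$ the ramification is tame, so each residue is a genuine square class of the residue field $\kappa(x)$, which is either a non-dyadic local field (at a horizontal divisor) or a function field of a curve over a finite field (along the special fibre).

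Next I would construct a single quadratic extension $L=K(\sqrt{f})$, for a suitable $f\in K^*$, that simultaneously annihilates every residue, so that $\alpha_L$ becomes unramified on a regular model of $L$. An unramified $2$-torsion class on such a two-dimensional regular scheme has index dividing $2$, because over each residue field above — a non-dyadic local field or a global function field — period equals index for $2$-torsion classes. Combining the two steps gives $\operatorname{ind}(\alpha)\mid[L:K]\cdot\operatorname{ind}(\alpha_L)=2\cdot 2=4$, which is exactly the period–index bound $\operatorname{ind}\mid(\operatorname{period})^2$ for these fields; hence $\deg D\le 4$.

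The main obstacle is the construction of the one quadratic extension that removes all ramification at once. Away from the intersection points of the normal crossings configuration this is straightforward, but at a ``hot'' point where two ramified components meet, or where a horizontal divisor meets the special fibre, the residues of the two branches must be matched; one must classify the possible local ramification of a period-$2$ class and show, possibly after further blow-ups separating the incompatible configurations, that a global $f$ realizing all the prescribed square roots exists. This matching is the technical heart of the argument (carried out in \cite{VP}). The hypothesis $p\neq 2$ is used throughout: at the dyadic prime the class could ramify wildly, the residue description $\kappa(x)^*/\kappa(x)^{*2}$ would fail, and splitting the ramification by a single square root would no longer be possible.
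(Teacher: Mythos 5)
You are trying to reprove this statement, which the paper itself does not prove: it quotes it from \cite{VP} (Corollary 2.2), where it is an immediate consequence of Saltman's theorem that over such a field $K$ every Brauer class of period $n$ prime to $p$ has index dividing $n^2$. Your skeleton is indeed Saltman's (regular proper model $\mathcal{X}/\mathcal{O}_k$, normal-crossings ramification divisor, tameness from $p\neq 2$, killing residues by radical extensions, then handling the unramified part), so the comparison is with his argument --- and there your middle step has a genuine gap, not a technicality.

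You claim one can choose a single $f\in K^*$ so that $\alpha_{K(\sqrt{f})}$ becomes unramified on a regular model. If that were always possible, you would have proved something much stronger, and false. An exponent-$2$ class unramified on a regular proper model over $\mathcal{O}_k$ is not merely of index $2$: by purity it lies in $\mathrm{Br}(\mathcal{X})$, which vanishes (proper base change plus the vanishing of the Brauer group of a proper curve over a finite field). So your step 1 would give $\ind(\alpha)\mid 2\cdot 1=2$ for every exponent-$2$ class, contradicting the existence of biquaternion division algebras over such $K$ --- precisely the algebras $H=Q_1\otimes Q_2$ that the present paper manipulates in Section 4. The failure is located exactly at the intersection points you wave at: at a ``hot'' point no choice of $f$, after any sequence of blow-ups, can match the residues of the two branches; indeed by Saltman's later work the presence of a hot point is equivalent to $\ind(\alpha)=4$, so no degree-$2$ extension can render such a class unramified. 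Saltman's actual proof uses two functions: he produces $f,g\in K^*$ such that $\alpha$ is unramified, hence trivial by $\mathrm{Br}(\mathcal{X})=0$, over the degree-$4$ extension $K(\sqrt{f},\sqrt{g})$, which is what yields $\ind(\alpha)\mid 4$. Your final bound $\ind(\alpha)\mid[L:K]\cdot\ind(\alpha_L)$ is formally correct, but the proof cannot be repaired without reinstating the second extension; and note also that your justification of ``unramified $\Rightarrow$ index $2$'' by period-equals-index over the residue fields is not an argument --- the correct input is the stronger triviality statement above, which is exactly what makes your one-extension plan untenable.
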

	\begin{theorem}(\cite[Theorem 4.5]{VP}) \label{rank}
		Let $k$ be a non-dyadic $p$-adic field and $K$ a function field in one variable over $k.$ Then every quadratic form over $K$ of rank at least 11 is isotropic.		
	\end{theorem}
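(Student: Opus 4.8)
The plan is to prove the equivalent statement that the $u$-invariant of $K$ is at most $10$, i.e.\ that every anisotropic quadratic form over $K$ has dimension at most $10$. Since any form of dimension $\ge 11$ contains an $11$-dimensional subform, and a form containing an isotropic subform is itself isotropic, it suffices to show that every form of dimension $11$ is isotropic. The starting structural input is the cohomological dimension: as $k$ is a non-dyadic $p$-adic field we have $cd_2(k)=2$, and since $K$ is a function field in one variable over $k$ one gets $cd_2(K)\le 3$. By the Milnor conjecture (a theorem of Voevodsky--Orlov--Vishik) the graded quotients $I^n(K)/I^{n+1}(K)$ are isomorphic to $H^n(K,\mu_2)$, which vanish for $n\ge 4$; hence $I^n(K)=I^{n+1}(K)$ for all $n\ge 4$, and the Arason--Pfister Hauptsatz then forces $I^4(K)=0$.

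With $I^4(K)=0$ in hand, I would climb the fundamental filtration $W(K)\supset I(K)\supset I^2(K)\supset I^3(K)$, peeling off the classical invariants of a putative anisotropic form $q$ of dimension $11$. First, adjoining at most a one-dimensional form makes the dimension even and, by an appropriate choice of the adjoined entry, kills the discriminant, placing the corrected class in $I^2(K)$. Next, the Clifford invariant $c(q)$ lies in ${}_2\mathrm{Br}(K)$, and by Theorem \ref{ind} every exponent-$2$ class over $K$ has index at most $4$; thus $c(q)$ is represented by a (bi)quaternion algebra, which is the Clifford invariant of an Albert form of dimension $\le 6$. Subtracting this Albert form corrects the Clifford invariant and lands the resulting class in $I^3(K)$.

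It therefore remains to bound anisotropic forms in $I^3(K)$. Since $I^4(K)=0$, the Arason invariant gives an isomorphism $I^3(K)\xrightarrow{\sim} H^3(K,\mu_2)$, and the arithmetic of the $p$-adic curve enters here: using a Kato-type cohomological Hasse principle together with the residue analysis along the closed points of a regular model of $C$ (where the residue fields are function fields of curves over finite fields, and the base contributes $H^3(k,\mu_2)=0$), one shows that every class in $H^3(K,\mu_2)$ is a single symbol. Consequently an anisotropic form in $I^3(K)$ is a single anisotropic $3$-fold Pfister form, of dimension exactly $8$.

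The main obstacle is the final dimension bookkeeping. The naive estimate from the two correction steps only gives $\dim q_{\mathrm{an}}\le 6+8=14$ (or $4+8$ when the index is $\le 2$), which is far from the desired $10$; the genuine point is that the Albert correction and the $I^3$-part can be chosen to share slots, so that the true anisotropic dimension collapses to at most $10$. Making this collapse precise --- that is, controlling simultaneously the discriminant, the Clifford invariant and the Arason invariant of an $11$-dimensional form and showing that the combined anisotropic kernel cannot have dimension $11$ --- is the delicate Parimala--Suresh-type analysis that forms the heart of the argument; everything above it is formal Witt-ring manipulation built on the two arithmetic inputs $cd_2(K)\le 3$ (hence $I^4(K)=0$) and $\mathrm{ind}\le 4$ from Theorem \ref{ind}.
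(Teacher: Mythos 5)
This statement is imported by the paper from \cite[Theorem 4.5]{VP} without proof, so your attempt has to be measured against Parimala--Suresh's original argument --- and against that benchmark there is a genuine gap, one you yourself concede in your last paragraph. Your formal reductions are fine and do match the skeleton of the known proof: $cd_2(K)\leq 3$ kills $H^n(K,\mu_2)$ for $n\geq 4$, whence $I^4(K)=0$ via Orlov--Vishik--Voevodsky and Arason--Pfister; the even companion with trivial discriminant lies in $I^2$; its Clifford class has index $\leq 4$ by Theorem~\ref{ind} and is hence the class of a biquaternion algebra with a $6$-dimensional Albert form; and granting that every class of $H^3(K,\mu_2)$ is a symbol, anisotropic forms in $I^3(K)$ are $3$-fold Pfister forms of dimension $8$. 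But this machinery yields only $\dim q_{\mathrm{an}} \leq 6+8=14$, and the descent from $14$ to $10$ is not ``bookkeeping'' to be optimized away --- it \emph{is} the theorem. In \cite{VP} that collapse rests on the full strength of what this paper records as Theorem~\ref{symbol}: not merely that each $H^3$-class is a symbol, but that finitely many classes $\alpha_i$ can be written \emph{simultaneously} as $(a)\cdot(b)\cdot(c_i)$ with the same $a,b$. It is this simultaneous common-slot property that lets the Albert form and the Pfister form in the Witt decomposition be chosen linked, so that their sum has anisotropic dimension at most $10$. Your proposal asserts that ``the Albert correction and the $I^3$-part can be chosen to share slots'' but produces no mechanism for this; nothing in the weak statement ``every class is a symbol'' forces any linkage between the degree-$2$ invariant (the biquaternion class) and the degree-$3$ invariant (the Arason symbol).

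A secondary problem: your sketch of the symbol theorem itself is not how it is proved and would not obviously succeed. Kato's Hasse principle does give injectivity of $H^3(K,\mathbb{Q}/\mathbb{Z}(2))$ into the sum of its local counterparts, but knowing that a class is a symbol locally at each place does not glue to a single global symbol representation --- that gluing is precisely the difficulty. The actual proof of \cite[Theorem 3.9]{VP} proceeds instead via Saltman's ramification-splitting techniques on a regular model of the curve, choosing functions whose divisors control the ramification loci of all the classes at once (which is also how the common-slot refinement is obtained). Within the present paper you could legitimately just cite Theorem~\ref{symbol} in its strong form rather than re-derive it; but even after doing so, the linkage computation that converts the naive bound $6+8$ (or $4+8$ in the index-$2$ case) into the bound $10$ remains entirely unsupplied, and that is the heart of \cite[Theorem 4.5]{VP}.
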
			
The following theorem states that $H^3(F,\mu_2)$ consists of symbols.	
	\begin{theorem}(\cite[Theorem 3.9]{VP}) \label{symbol}
		Let $k$ be a non-dyadic $p$-adic field and $K$ a function field in one variable over $k.$ Let $k$ be a non-dyadic $p$-adic field and $K$ a function field in one variable over $k.$ Let $\alpha_i\in H^3(K,\mu_2), 1\leq i \leq n.$ Then there exists $a,b,c_i\in K^*$ such that $\alpha_i=(a)\cdot (b)\cdot(c_i).$ In particular, every element in $H^3(K,\mu_2)$ is a symbol.
	\end{theorem}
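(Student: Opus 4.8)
The plan is to translate the statement into the language of quadratic forms via the Milnor conjecture (Voevodsky's theorem), and then to separate the two assertions: first that each $\alpha_i$ is an individual symbol, and second the refinement that the symbols can be taken with two common slots. Throughout, the arithmetic hypotheses enter through the bound $u(K)\leq 10$ supplied by Theorem \ref{rank} together with the index bound of Theorem \ref{ind}. I recall the Milnor isomorphisms $e_m:I^m(K)/I^{m+1}(K)\xrightarrow{\sim}H^m(K,\mu_2)$ carrying $\langle\langle a_1,\dots,a_m\rangle\rangle$ to $(a_1)\cdots(a_m)$. Since $u(K)\leq 10$ means every form of rank at least $11$ is isotropic, while the Arason--Pfister Hauptsatz forces any nonzero anisotropic form in $I^4(K)$ to have dimension at least $16$, we get $I^4(K)=0$; hence $e_3$ identifies $H^3(K,\mu_2)$ with $I^3(K)$.

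For the single-symbol assertion, I would fix $\alpha\in H^3(K,\mu_2)$ and let $q\in I^3(K)$ be the anisotropic form with $e_3(q)=\alpha$. By Pfister's theorem on the admissible dimensions of anisotropic forms in $I^3$, the only possibilities below $16$ are $8,12,14$; since $u(K)\leq 10$ forces $\dim q\leq 10$, either $q=0$ or $\dim q=8$. In the latter case a standard consequence of the Hauptsatz shows that an $8$-dimensional anisotropic form in $I^3$ is similar to a $3$-fold Pfister form $\langle\langle a,b,c\rangle\rangle$, whence $\alpha=e_3(q)=(a)(b)(c)$ is a symbol. This already establishes the final assertion of the theorem.

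For the common slots I would reformulate the problem as a linkage statement for the Pfister forms $\pi_i$ with $e_3(\pi_i)=\alpha_i$. A common first slot $a$ amounts to requiring $\langle\langle a\rangle\rangle$ to divide every $\pi_i$; equivalently, all $\alpha_i$ should vanish over $K(\sqrt a)$, and then the fact that a Pfister form hyperbolic over $K(\sqrt a)$ is divisible by $\langle\langle a\rangle\rangle$ yields $\pi_i\cong\langle\langle a,u_i,v_i\rangle\rangle$. Having arranged the common first slot, the existence of a common second slot $b$ reduces, after passing to $L=K(\sqrt a)$, to showing that the $2$-fold Pfister forms $\langle\langle u_i,v_i\rangle\rangle$ (equivalently the quaternion algebras $(u_i,v_i)$) admit a common quadratic slot $b$ defined over $K$; here the index bound of Theorem \ref{ind} keeps the relevant Brauer classes within reach of quaternion algebras, so that the linkage machinery applies.

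The main obstacle is exactly these two linkage steps: producing a single $a$ that is simultaneously a slot of all the $\pi_i$, and then a single $b$ over $L$. This is where the geometry of $K$ must be used in earnest. I would take a regular proper model $\mathcal X$ of the curve over the ring of integers of $k$, analyze the ramification of the finitely many classes $\alpha_i$ through the residue maps $\partial_D:H^3(K,\mu_2)\to H^2(k(D),\mu_2)$ into the residue fields $k(D)$ (which have cohomological dimension at most $2$), and choose $a,b$ so as to absorb all of this ramification simultaneously, reducing the remaining unramified contribution to the residue field and invoking a local--global principle in the spirit of Parimala--Suresh. Controlling the ramification of all the $\alpha_i$ at once, rather than one class at a time, is the delicate point, and it is precisely here that the hypothesis $cd(K)\leq 3$ and the $p$-adic-curve structure are indispensable.
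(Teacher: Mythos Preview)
The paper does not give its own proof of this theorem: it is quoted from \cite[Theorem~3.9]{VP} in the section of ``known results'' and used as a black box. So there is nothing in the paper to compare your argument against directly, and the relevant comparison is with the original proof in \cite{VP}.

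Your single-symbol argument is circular as written. You invoke Theorem~\ref{rank} (i.e.\ \cite[Theorem~4.5]{VP}) to get $u(K)\le 10$ and hence force an anisotropic representative in $I^3(K)$ to have dimension~$8$. But in \cite{VP} the isotropy result Theorem~4.5 is proved \emph{using} the symbol theorem~3.9 (more precisely, the results of \S3 there feed into \S4). So within the logical structure of \cite{VP} you cannot appeal to the $u$-invariant bound to establish the symbol statement; the implication runs the other way. The same objection applies to your use of $I^4(K)=0$: this too is downstream of the $H^3$ analysis in \cite{VP}. If you want a non-circular argument you must produce the symbol directly from the arithmetic of $K$, not from a $u$-invariant bound.

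For the common-slot refinement you correctly identify that the real content is arithmetic-geometric: pick a regular proper model of the curve, look at the residues $\partial_D(\alpha_i)$ along the components of the special fibre, and choose $a,b$ to kill all this ramification simultaneously. This is exactly the strategy of \cite{VP}, but your write-up stops at the point where the work begins. The construction of $a$ and $b$ in \cite{VP} requires careful bookkeeping with divisors on the model, local class field theory at the closed points, and a Bertini/moving-type argument to realise the desired ramification pattern by an actual function; none of this is indicated in your sketch beyond the phrase ``in the spirit of Parimala--Suresh''. As it stands, the proposal names the right landscape but does not supply a proof.
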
	
	\section{Main theorems}	
	
	
\subsection {\bf Case $\deg(A)=2m,$ where m is an odd integer}
Let $F$ be the function field of a smooth, geometrically integral curve over a $p$-adic field with $p\neq 2.$ Let $(A,\sigma)$ be a central simple algebra of degree $2m,$ where $m$ is an odd integer over $F,$   with an orthogonal involution $\sigma.$ If $A$ is split then it is shown that $\mathbf{PSim}_+(A,\sigma)(F)/R=1$ in \cite[Theorem 7.1]{PS}. If $A$ is non-split then, $A\cong M_m(Q),$ for some quaternion division algebra $Q.$ Further, there exists an orthogonal involution $\tau$ on $Q$ such that $\sigma=\tau_h$, for
some hermitian form $h$ of odd rank $m$ over $(Q, \tau),$ i.e., $(A, \sigma)
\cong
(M_m(Q), \tau_h)$ (see
\cite[Chapter 1]{KMRT}). We have the following theorem.
\begin{theorem}\label{odd}
	Let $F$ be the function field of a smooth, geometrically integral curve over a $p$-adic field with $p\neq 2.$ Let $(A,\sigma)$ be a central simple algebra of degree $2m,$ where $m$ is an odd integer over $F,$   with an orthogonal involution $\sigma$ such that $\disc(\sigma)=1$.
	Then $$G_+(A,\sigma)=Hyp(A,\sigma)\cdot F^{*2}.$$
	
\end{theorem}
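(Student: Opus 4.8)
The inclusion $Hyp(A,\sigma)\cdot F^{*2}\subseteq G_+(A,\sigma)$ is the routine one: squares arise as multipliers of the scalar proper similitudes $t\cdot\mathrm{id}$, and a norm from a field over which $\sigma$ becomes hyperbolic is a proper multiplier by the standard transfer argument. So the whole content lies in the reverse inclusion $G_+(A,\sigma)\subseteq Hyp(A,\sigma)\cdot F^{*2}$, which I would prove in the nonsplit case, the split case being Theorem \ref{split}. First I would fix the structure of $A$: an orthogonal involution forces $\exp(A)\mid 2$, so by Theorem \ref{ind} the index of $A$ is a power of $2$ that is at most $4$; since it also divides $\deg A=2m$ with $m$ odd, the index is $1$ or $2$. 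Index $1$ is the split case, hence I may assume $A\cong M_m(Q)$ with $Q$ a quaternion division algebra and $\sigma=\tau_h$ for a hermitian form $h$ of odd rank $m$ over $(Q,\tau)$.

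The mechanism that makes the odd case tractable is the reduced norm. Given $\lambda\in G_+(A,\sigma)$, choose a proper similitude $g$ with $\sigma(g)g=\lambda$. Then $\Nrd(g)^2=\Nrd(\sigma(g)g)=\Nrd(\lambda\cdot 1)=\lambda^{2m}$, and properness pins the sign so that $\Nrd(g)=\lambda^{m}$. Because $m$ is odd we have $\lambda^{m}\equiv\lambda\pmod{F^{*2}}$, whence $\lambda\in \Nrd(A^*)\cdot F^{*2}=\Nrd(Q^*)\cdot F^{*2}$. This already yields $G_+(A,\sigma)\subseteq \Nrd(Q^*)\cdot F^{*2}$ and reduces the theorem to the single inclusion $\Nrd(Q^*)\subseteq Hyp(A,\sigma)\cdot F^{*2}$.

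To control $Hyp(A,\sigma)$ I would first strip $h$ down to its anisotropic kernel. A bound on the rank of anisotropic hermitian forms over $(Q,\tau)$, coming from the isotropy statement of Theorem \ref{rank} through a Jacobson-type transfer to a quadratic form of dimension a fixed multiple of the rank, forces the anisotropic core $h_0$ to have small odd rank. By Witt cancellation $G_+(A,\sigma)=G_+(\tau_{h_0})$, while the hyperbolic summands only enlarge the family of hyperbolicity fields, so $Hyp(\tau_{h_0})\subseteq Hyp(A,\sigma)$; moreover $h_0$ inherits $\disc=1$ since hyperbolic planes preserve the discriminant. Thus everything is governed by the core $(Q,\sigma_0)$, where $\disc(\sigma_0)=1$ constrains $\sigma_0=\mathrm{Int}(u)\circ\gamma$ with $\Nrd(u)$ a square; here $\Nrd(Q^*)$ is the value group of a $2$-fold Pfister form, each value being $N_{F(q)/F}(q)$ for a quadratic subfield $F(q)\subset Q$ that splits $Q$. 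For this core I would compute $G_+(Q,\sigma_0)$ and its hyperbolicity fields explicitly, bringing in Corollary \ref{nrd}, Proposition \ref{pfister}, and the hypothesis $cd(F)\le 3$.

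I expect the last step to be the main obstacle: converting membership in $\Nrd(Q^*)$ into membership in $Hyp(A,\sigma)\cdot F^{*2}$. Over an arbitrary quadratic splitting field $F(q)$ the involution need not become hyperbolic, so one cannot simply read off $\Nrd(q)=N_{F(q)/F}(q)$; the delicate point is to produce, for each reduced-norm value, a finite extension that simultaneously kills the class of $Q$, makes the residual even-dimensional discriminant-trivial quadratic form hyperbolic, and carries $\lambda$ in its norm group. I anticipate this resting on the arithmetic of $F$—the index bound (Theorem \ref{ind}), the isotropy of high-rank forms (Theorem \ref{rank}), and the symbol description of $H^3(F,\mu_2)$ (Theorem \ref{symbol})—used to push the discriminant-trivial, odd-rank core onto the $3$-fold Pfister computation of Proposition \ref{pfister}, where the identity $Hyp=G=F^*$ finally closes the gap and, via Theorem \ref{merkurjev}, yields the triviality asserted.
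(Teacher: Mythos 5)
Up to the reduction $G_+(A,\sigma)\subseteq \Nrd(Q^*)\cdot F^{*2}$ you follow the paper's proof exactly: exponent $2$ plus the degree hypothesis forces $\ind(A)\in\{1,2\}$, and the identity $\Nrd(g)=\mu(g)^m$ for proper similitudes together with the oddness of $m$ gives $\lambda\equiv\lambda^m \pmod{F^{*2}}$. The genuine gap is the endgame, which you explicitly leave open (``I expect the last step to be the main obstacle''), and the tools you anticipate for it are the wrong ones. The paper closes the argument with one elementary observation your sketch never reaches: every reduced norm value of a quaternion division algebra $Q$ is a norm from a quadratic subfield $L\subset Q$, and every such $L$ splits $Q$. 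So the extension you ask for --- one that ``simultaneously kills the class of $Q$\ldots and carries $\lambda$ in its norm group'' --- exists automatically: choose $L\subset Q$ quadratic with $\lambda\in N_{L/F}(L^*)$; then $A_L$ is split, $\sigma_L$ is adjoint to an even-dimensional quadratic form of trivial discriminant, and $L$ is again the function field of a $p$-adic curve. The paper then argues $Hyp(A_L,\sigma_L)=L^*$ over $L$ (invoking the isotropy theorem, Theorem \ref{rank}) and concludes $\lambda\in N_{L/F}\bigl(Hyp(A_L,\sigma_L)\bigr)\subseteq Hyp(A,\sigma)$ by transitivity of norms. Note that Corollary \ref{nrd}, Proposition \ref{pfister} and the symbol description of $H^3$ play no role in the odd-degree case; in the paper they are the engine of the even-degree case (Theorem \ref{even}), where the Rost invariant is available, so your plan routes the problem through machinery that never gets traction here.

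Your fallback through the anisotropic kernel would also not have worked as described. If the core had rank one, you would be facing an orthogonal involution of trivial discriminant on the division algebra $Q$ itself, and no such involution exists: for $\sigma_0=\mathrm{Int}(u)\circ\gamma$ with $u$ a pure quaternion one has $\disc(\sigma_0)=u^2\cdot F^{*2}$ (not $\Nrd(u)$, which equals $-u^2$), and $u^2\in F^{*2}$ would produce zero divisors $(u-c)(u+c)=0$ in $Q$; the paper uses exactly this fact to exclude $m=1$ at the outset. More importantly, Witt decomposition merely renames the problem: for the core you would still have to convert membership in $\Nrd(Q^*)$ into membership in $Hyp\cdot F^{*2}$, and your proposal supplies no mechanism for that conversion --- which is precisely the quadratic-splitting-subfield step above. (For fairness: the paper's own bookkeeping at this point is garbled --- ``rank$(\sigma_L)\geq 36$'' should be $\dim\sigma_L=2m\geq 6$, Theorem \ref{rank} as stated needs dimension at least $11$, and isotropy alone does not immediately yield $Hyp(A_L,\sigma_L)=L^*$ --- but the structural idea you are missing, that the splitting field and the norm condition come for free simultaneously from $\Nrd(Q^*)$, is the decisive one.)
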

\begin{proof}
	Let $A$ be a central simple algebra of $\deg(A)= 2m$, where $m=2r+1$ is an odd integer. For $r=0,$ the algebra $A$ becomes a quaternion division algebra and quaternion division algebras do not carry any orthogonal involution with trivial discriminant (see \cite[\S 7]{KMRT}). Now
	$\exp(A)$ divides $\ind(A),$ and $\ind(A)$ divides $\deg(A).$ Hence, $\ind(A)\in\{1,2\}.$ So $D_A=F,$ or $D_A$ is a quaternion division algebra over $F.$ Hence, $\Nrd_{D_A}(D_A^*)$ contains $F^{*2}.$ Also, by \cite[Theorem 1, pg 146 ]{D}, $\Nrd_{D_A}(D_A^*)=\Nrd_A(A^*).$ Hence, $$F^{*2}\cdot \Nrd_A(A^*)\subseteq \Nrd_A(A^*)\subseteq F^{*2}\cdot \Nrd_A(A^*). $$
	Consider $\lambda\in G_+(A,\sigma).$ It implies that, $\lambda=\sigma(a)\cdot a$ for some $a\in A^*.$ So, $\lambda^{2r+1}=\Nrd(a),$ and $\lambda= \Nrd(a)\cdot (\lambda^{-r})^2\in \Nrd_A(A^*).$ Hence, there exists a finite field extension $L/F$ such that $A_L=0,$ $\lambda\in N_{L/F}(L^*),$ and $\sigma_L$ corresponds to a quadratic form of even rank, trivial discriminant. 
	
	According to our assumption, $\deg(A)\geq 6$ and hence, rank$(\sigma_L)\geq 36.$ As rank$(\sigma_L)\geq 11,$ using Theorem \ref{rank}, for  we obtain that {$\sigma$}$_L$ is isotropic. 
	Hence, $Hyp(A_L,\sigma_L)=L^*$ and, it follows that, $\lambda\in Hyp(A,\sigma).$
\end{proof}


\subsection {\bf Case $\deg(A)=2m,$ where m is an even integer}
	Let $A$ be a central simple algebra of $\deg(A)=2\cdot2r,$ where $r$ is an integer. As, the exponent of $A,$ $\exp(A)\leq 2,$ we have $\ind(A)\leq 4,$ (see Theorem \ref{ind}). Hence, either in the non-split case, $(A,\sigma)\cong (M_{2r}(Q),\tau_h)$ for some quaternion division algebra $Q$ with an orthogonal involution $\tau,$ and $h$ is a hermitian form of even rank over $(Q,\tau)$ or, $(A,\sigma)\cong (M_r(H),\gamma_{h})$ for some biquaternion division algebra $H=Q_1\otimes Q_2$ with an orthogonal involution $\gamma,$ and $h$ is a hermitian form of any rank over $(H,\gamma).$ Moreover, $\deg(A)\equiv 0$ (mod $4$) and $\disc(\sigma)=1.$ Hence the Clifford algebra $C(A,\sigma)=C_+(A,\sigma)\times C_(A,\sigma)$ (\cite[\S 8]{KMRT}).
\medskip 

In this subsection we start with the following lemma:
\begin{lemma}\label{Hyp-Nrd(C)}
	Let $F$ be the function field of a smooth, geometrically integral curve over a $p$-adic field with $p\neq 2.$ Let $(Q,\sigma)$ be a quaternion division algebra over $F$ with an orthogonal involution $\sigma$ such that $\disc(\sigma)=1\in F^*/F^{*2}.$  Let $h$ be a hermitian form of even rank over $(Q,\sigma).$ Then $\Nrd(C_+(h))\subseteq Hyp(h).$
\end{lemma}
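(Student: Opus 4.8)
The plan is to realize an arbitrary element of $\Nrd(C_+(h))$ as a norm $N_{L/F}(x)$ coming from a finite extension $L/F$ that \emph{splits} $C_+(h)$, and then to observe that over such a splitting field the Clifford invariant of $h$ dies; the hyperbolicity results already available over $L$ then force the norm into $\mathrm{Hyp}(h)$.

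First I would pass to the division part. Writing $C_+(h)\cong M_s(D)$ with $D$ a central division $F$-algebra, one has $\Nrd\bigl(C_+(h)^*\bigr)=\Nrd(D^*)$ by \cite[Theorem 1, pg 146]{D}. Given $c\in D^*$, the field $F(c)$ embeds in a maximal subfield $L\subset D$, and since $[L:F]=\deg D$ a direct computation gives $\Nrd_D(c)=N_{L/F}(c)$. Being a maximal subfield of the division algebra $D$, the field $L$ splits $D$, hence splits $C_+(h)=M_s(D)$. Thus every $\lambda\in\Nrd(C_+(h))$ has the form $\lambda=N_{L/F}(c)$ for a finite extension $L/F$ which splits $C_+(h)$.

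Next I would analyze $h$ over $L$. Scalar extension does not change the discriminant, so $\disc(h_L)=1$, and since the Clifford invariant is $C(h_L)=[C_+(h)_L]\in {}_2\mathrm{Br}(L)/(Q_L)$ while $C_+(h)_L$ is split, the form $h_L$ has trivial Clifford invariant over $L$. Crucially, $L$ is again a finite extension of the function field of a curve over a non-dyadic $p$-adic field, so the quoted theorems apply over $L$. If $Q_L$ is still a division algebra, Theorem \ref{non-split} applied over $L$ yields $\mathrm{Hyp}(h_L)=L^*$. If $Q_L$ splits, then by Morita equivalence $h_L$ corresponds to an even-dimensional quadratic form $q_L$ with trivial discriminant and trivial Clifford invariant, i.e.\ $q_L\in I^3(L)$; by Theorem \ref{rank} its anisotropic part has small dimension and so is either $0$ or a $3$-fold Pfister form, whence Proposition \ref{pfister} gives $\mathrm{Hyp}(q_L)=L^*$ once more. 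In either case $\mathrm{Hyp}(h_L)=L^*$. Transitivity of $\mathrm{Hyp}$ under norms then closes the argument: for a finite $M/L$ with $h_M$ hyperbolic one has $N_{M/F}=N_{L/F}\circ N_{M/L}$ and $N_{M/F}(M^*)\subseteq\mathrm{Hyp}(h)$, so $N_{L/F}\bigl(\mathrm{Hyp}(h_L)\bigr)\subseteq\mathrm{Hyp}(h)$; since $\mathrm{Hyp}(h_L)=L^*$ this gives $\lambda=N_{L/F}(c)\in\mathrm{Hyp}(h)$.

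The main obstacle is the case where $Q$ splits over $L$: extracting $\mathrm{Hyp}(q_L)=L^*$ for an $I^3$-form from the available results requires bounding the dimension of the anisotropic part so that it is a $3$-fold Pfister form, and transferring $\mathrm{Hyp}$ across the Witt-equivalent anisotropic part. By comparison the identity $\Nrd_D(c)=N_{L/F}(c)$ for maximal subfields and the norm-transitivity of $\mathrm{Hyp}$ are routine, but both should be recorded carefully since the whole reduction rests on them.
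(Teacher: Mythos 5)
Your argument is correct and is essentially the paper's own proof: the paper likewise picks a finite extension $L/F$ splitting $C_+(h)$ with $\lambda\in N_{L/F}(L^*)$ (implicitly via maximal subfields of the division part and the Draxl citation you give), observes that $h_L$ then has even rank, trivial discriminant and trivial Clifford invariant, invokes Theorem \ref{non-split} to get $Hyp(h_L)=L^*$, and finishes by norm transitivity of $Hyp$. The one divergence is that you separately handle the case where $Q_L$ splits, reducing to an even form in $I^3(L)$ whose anisotropic part is similar to a $3$-fold Pfister form and applying Proposition \ref{pfister}; the paper instead applies Theorem \ref{non-split} uniformly even though that result is quoted for non-split algebras, so your case analysis fills a small gap the paper elides rather than constituting a different route.
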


\begin{proof}

	We will start with the case when $A\cong M_{2r}(Q),$ for some quaternion division algebra $Q.$ Consider $\lambda\in \Nrd_{A}(C_+(A,\sigma)^*)=\Nrd(C_+(h)).$ Hence, there exist a finite field extension $L/F$ such that $C_+(h_L)=0,$ and $\lambda\in N_{L/F}(L^*).$ Hence, $h_L$ is a hermitian form over $(Q_L,\tau_L)$  of even rank, trivial discriminant and trivial Clifford invariant. So, by Theorem \ref{non-split}, $G_+(h_L)=Hyp(h_L)= L^*.$ Thus, we obtain that $\Nrd_A(C_+(h))\subseteq Hyp(h).$	
\end{proof}

\begin{lemma}\label{HE}
	Let $F$ be the function field of a smooth, geometrically integral curve over a $p$-adic field with $p\neq 2.$ Let $(Q_1\otimes Q_2,\sigma)$ be a biquaternion division algebra over $F$ with an orthogonal involution $\sigma$ such that $\disc(\sigma)=1\in F^*/F^{*2}.$  Let $h$ be a hermitian form of any rank over $(Q_1\otimes Q_2,\sigma).$ Then $\Nrd(C_+(h))\subseteq Hyp(h).$
\end{lemma}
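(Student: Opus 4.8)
The plan is to transplant the proof of Lemma~\ref{Hyp-Nrd(C)} from the quaternion setting to the biquaternion one, the only structural change being that the underlying division algebra is now $H = Q_1 \otimes Q_2$ of index $4$; the decisive input is again Theorem~\ref{non-split}. Write $A = M_r(H)$ for the matrix algebra carrying the adjoint involution $\sigma_h$, so that $C_+(h) := C_+(A, \sigma_h)$ is a central simple $F$-algebra. I would begin by fixing $\lambda \in \Nrd(C_+(h))$ and writing $\lambda = \Nrd_{C_+}(c)$ for some $c \in C_+(h)^*$.

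The first step is to realize $\lambda$ as a norm from a splitting field of $C_+(h)$. Since $\mathrm{char}\,F = 0$, the element $c$ generates a separable subfield $F(c)$, which is contained in some maximal separable subfield $L \subseteq C_+(h)$; such an $L$ splits $C_+(h)$, and the reduced norm of $c$ agrees with its field norm, so $\lambda = \Nrd_{C_+}(c) = N_{L/F}(c) \in N_{L/F}(L^*)$. Thus I obtain a finite extension $L/F$ with $\lambda \in N_{L/F}(L^*)$ over which $C_+(h_L) = (C_+(h))_L$ splits, using that the even Clifford algebra commutes with scalar extension.

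The second step is to verify that $h_L$ satisfies the hypotheses of Theorem~\ref{non-split}. Its discriminant remains trivial under scalar extension, and its relative Clifford invariant $C(h_L) = [C_+(h_L)] \bmod (H_L)$ vanishes because $C_+(h_L)$ is already split. Granting that $h_L$ has even rank, Theorem~\ref{non-split} gives $Hyp(h_L) = L^*$. Composing norms, each generator $N_{M/L}(M^*)$ of $Hyp(h_L)$---taken over those finite $M/L$ for which $h_M$ is hyperbolic---pushes forward to $N_{M/F}(M^*) \subseteq Hyp(h)$, so that $N_{L/F}(L^*) = N_{L/F}(Hyp(h_L)) \subseteq Hyp(h)$; in particular $\lambda \in Hyp(h)$, which gives $\Nrd(C_+(h)) \subseteq Hyp(h)$.

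The genuine obstacle is the even-rank hypothesis invoked above. In Lemma~\ref{Hyp-Nrd(C)} the rank of $h$ is assumed even, whereas here $h$ may have arbitrary rank $r$, so the odd-rank case falls outside Theorem~\ref{non-split} as stated. To complete the argument one must either show that the standing condition $\disc(\sigma_h) = 1$ together with the biquaternion structure of $H$ forces $r$ to be even, or reduce the odd-rank case to the even-rank one before applying Theorem~\ref{non-split}. By comparison, the construction of $L$ and the norm-transitivity descent are formal and transfer verbatim from the quaternion case.
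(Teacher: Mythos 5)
Your first two steps coincide with the paper's own argument: the paper likewise produces a finite extension $L/F$ with $C_+(h_L)$ split and $\lambda\in N_{L/F}(L^*)$, and it finishes by the same norm-transitivity descent into $Hyp(h)$. But the obstacle you flag in your final paragraph is a genuine gap, not a loose end, and it is precisely the content that distinguishes Lemma \ref{HE} from Lemma \ref{Hyp-Nrd(C)}: here $h$ lives over the index-$4$ algebra $H=Q_1\otimes Q_2$ and its rank $r$ may well be odd. Neither of your two suggested escape routes works as stated. The discriminant hypothesis imposes no parity constraint on $r$ (the degree of $M_r(H)$ is $4r$, which is $\equiv 0 \pmod 4$ for every $r$, so the standing assumptions are consistent with odd $r$), and there is no way to modify $h$ over $F$ itself to make its rank even. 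So the proposal, as written, proves only the even-rank case, which is not what the lemma asserts.

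What the paper actually does is reduce the \emph{index of the algebra} rather than the parity of the rank. Let $q$ be an Albert form of $Q_1\otimes Q_2$, a $6$-dimensional form of trivial discriminant. Since $u(F)=8$ \cite[Theorem 4.6]{VP1}, the spinor norm group satisfies $Sn(q)=F^*$, so $\lambda\in Sn(q)$; this yields a finite extension $K/F$ over which $q$ becomes isotropic, hence $H_K$ is Brauer-equivalent to a quaternion algebra $Q$. Morita equivalence then converts $(M_r(H_K),\sigma_{h_K})$ into $(M_{2r}(Q),\sigma_{h'})$ with $h'$ of rank $2r$ --- even for \emph{every} $r$, odd or not. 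Passing to the composite $M=L(\sqrt{-\alpha})K$, where $L$ and $\alpha$ are as in your first step (so $\lambda=N_{L/F}(\alpha)$ and $\alpha$ is, up to the usual adjustments, a norm from $M$), the form $h_M$ has even rank, trivial discriminant and trivial Clifford invariant, so Theorem \ref{non-split} gives $Hyp(h_M)=M^*$, and your norm-transitivity argument then lands $\lambda$ in $Hyp(h)$. Without this index-reduction step --- Albert form, the $u$-invariant input, and the rank-doubling via Morita --- the argument cannot be completed by the quaternion-case template alone.
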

\begin{proof}

Let $h$ be the hermitian form over $(Q_1\otimes Q_2, \sigma)$ with any rank, say $r.$ Consider, $\lambda\in \Nrd(C_+(h))= \Nrd(C_+(M_{r}(Q_1\otimes Q_2),\sigma_h))\subseteq F^*.$  Then there exist a finite extension $L/F,$ such that $C_+(h_L)=0$ and $\lambda=N_{L/F}(\alpha),$ for some $\alpha\in L^*.$ 
\medskip

On the other hand, let $q$ be an Albert form for $Q_1\otimes Q_2.$ So, $q$ is a $6$-dimensional quadratic form over $F$ with trivial discriminant. As the u-invariant of $F,$ $u(F)=8$ (see \cite[Theorem 4.6]{VP1}), the group of spinor norms of $q,$ Sn($q$)=$F^*.$ Hence, $\lambda\in Sn(q).$ Thus there exists a finite  extension $K/F$ such that $q_K$ is isotropic, and over $K,$ $Q_1\otimes Q_2\sim Q,$ for some quaternion division algebra $Q.$ By Morita correspondence $(M_{r}(Q_1\otimes Q_2),\sigma_h)$ will correspond to $(M_{2r}(Q),\sigma_{h_K}).$ Over the composite field $M=L(\sqrt{-\alpha})K,$ $C_+(h_M)=0.$ By Theorem \ref{non-split}, $Hyp(h_M)=M^*,$ and $\lambda= N_{L/F}N_{M/L}(\sqrt{-\alpha}).$ Thus, $\lambda\in Hyp(h).$
\end{proof}
\begin{theorem}\label{even}
Let $F$ be the function field of a smooth, geometrically integral curve over a $p$-adic field with $p\neq 2.$ Let $(A,\sigma)$ be a central simple algebra of degree $2m,$ where $m$ is an even integer over $F,$ with an orthogonal involution, Let the involution $\sigma$ has trivial discrimimant. 
Then $$\mathbf{PSim}_+(A,\sigma)(F)/R=(1)$$

\end{theorem}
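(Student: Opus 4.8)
The plan is to invoke Merkurjev's description (Theorem \ref{merkurjev}), which reduces the assertion $\mathbf{PSim}_+(A,\sigma)(F)/R=(1)$ to proving the equality $G_+(A,\sigma)=Hyp(A,\sigma)\cdot F^{*2}$; since the inclusion $Hyp(A,\sigma)\cdot F^{*2}\subseteq G_+(A,\sigma)$ is already built into that isomorphism, only the reverse inclusion needs argument. If $A$ is split this is exactly Theorem \ref{split}, so I may assume $A$ is non-split. Because $\exp(A)\le 2$, Theorem \ref{ind} gives $\ind(A)\le 4$, leaving precisely the two cases $(A,\sigma)\cong(M_{2r}(Q),\tau_h)$ with $Q$ a quaternion division algebra and $(A,\sigma)\cong(M_r(Q_1\otimes Q_2),\gamma_h)$ with $Q_1\otimes Q_2$ a biquaternion division algebra; in either case $h$ is the associated hermitian form and $Hyp(h)=Hyp(A,\sigma)$.

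Fix $\lambda\in G_+(A,\sigma)$. By the multiplier description recalled in the preliminaries, $\lambda\cdot h\cong h$, and hence $\langle\langle\lambda\rangle\rangle\cdot h=h\perp(-\lambda h)$ is hyperbolic. Since $\sigma$ has trivial discriminant, $h$ plays the role of a class in $I^2$ and carries a well-defined Clifford invariant $C(h)=[C_+(A,\sigma)]$; the key step is to read off from the hyperbolicity of $\langle\langle\lambda\rangle\rangle\cdot h$ the vanishing of the associated degree-three (Rost/Arason) invariant, which I expect to take the shape $(\lambda)\cup[C_+(A,\sigma)]=0$ in $H^3(F,\mu_4^{\otimes 2})$. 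This is the expected multiplicative behaviour of the cohomological invariants under a one-fold Pfister factor: passing from $h$ to $\langle\langle\lambda\rangle\rangle\cdot h$ promotes the $e_2$-type Clifford class to the $e_3$-type class $(\lambda)\cup C(h)$, which must vanish once the form is hyperbolic.

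Granting this relation, the remainder is mechanical. The canonical involution on $C(A,\sigma)$ restricts to an involution of the first kind on each factor $C_\pm(A,\sigma)$ (as $\deg A\equiv 0\pmod 4$ and $\disc\sigma=1$), so $\exp(C_+(A,\sigma))\le 2$ and therefore $\ind(C_+(A,\sigma))\le 4$ by Theorem \ref{ind}. Corollary \ref{nrd} then applies to $C_+(A,\sigma)$ and yields $\lambda\in\Nrd(C_+(A,\sigma)^*)=\Nrd(C_+(h))$. Finally Lemma \ref{Hyp-Nrd(C)} in the quaternion case and Lemma \ref{HE} in the biquaternion case give $\Nrd(C_+(h))\subseteq Hyp(h)=Hyp(A,\sigma)$, whence $\lambda\in Hyp(A,\sigma)\subseteq Hyp(A,\sigma)\cdot F^{*2}$. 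Combining the two non-split cases with the split case establishes $G_+(A,\sigma)=Hyp(A,\sigma)\cdot F^{*2}$, and the theorem follows.

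The step I expect to be the main obstacle is the cohomological identity $(\lambda)\cup[C_+(A,\sigma)]=0$ in the precise group $H^3(F,\mu_4^{\otimes 2})$ demanded by Corollary \ref{nrd}. Two points need care. First, the Clifford invariant is only a relative invariant, well defined modulo $(A)$, and correspondingly the Rost invariant $R(h)$ is defined modulo $F^*\cup(A)$, so the hyperbolicity of $\langle\langle\lambda\rangle\rangle\cdot h$ a priori only gives $(\lambda)\cup[C_+(A,\sigma)]\in F^*\cup(A)$; one must either promote this to an exact vanishing (for instance by also checking $(\lambda)\cup(A)=0$, or by playing $C_+$ against $C_-$ through the relation $[C_+]+[C_-]=[A]$, using that a proper similitude preserves each factor) or verify directly that the ambiguity does not obstruct membership in $\Nrd(C_+(h))$. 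Second, one must pass correctly between $\mu_2$- and $\mu_4^{\otimes 2}$-coefficients, since $C_+(A,\sigma)$ may have index $4$; this is exactly the situation for which Corollary \ref{nrd} is designed, so the reconciliation should go through, but it is where the argument must be made with care.
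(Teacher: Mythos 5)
Your endgame matches the paper's: $\exp(C_+(h))=2$, hence $\ind(C_+(h))\le 4$ by Theorem \ref{ind}, then Corollary \ref{nrd} to land in $\Nrd(C_+(h))$, then Lemmas \ref{Hyp-Nrd(C)} and \ref{HE} to get into $Hyp$. But the step you yourself flag as the main obstacle --- the exact vanishing $(\lambda)\cup[C_+(A,\sigma)]=0$ in $H^3(F,\mu_4^{\otimes 2})$ --- is a genuine gap, and neither of your suggested repairs goes through over $F$. Hyperbolicity of $\langle 1,-\lambda\rangle\cdot h$ only yields $(\lambda)\cup[C_+(h)]=(x)\cup[A]$ for some $x\in F^*$ (this is exactly the identity the paper records), and there is no general way to discard the $(x)\cup[A]$ term: with $m$ even, properness of the similitude gives $\Nrd(g)=\mu(g)^m\in F^{*2}$, which carries no information, so $\lambda$ need not be a reduced norm from $A$ and $(\lambda)\cup(A)$ need not vanish; nor does trading $C_+$ for $C_-$ via $[C_+]+[C_-]=[A]$ remove the ambiguity --- it merely changes $x$.

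The paper's resolution is different, and it is the idea your proposal is missing: instead of forcing vanishing over $F$, it trades $\lambda$ for a new element over a two-step tower of extensions where the Rost class vanishes on the nose. By Theorem \ref{symbol} one writes $2\cdot R(\eta)$ as a symbol $(a)\cup(b)\cup(c)$; by Proposition \ref{pfister} the associated $3$-fold Pfister form $q$ satisfies $Hyp(q)=F^*$, so $\lambda=N_{L/F}(\alpha)$ for some finite $L/F$ with $q_L$ hyperbolic, whence $R(\eta)_L$ is $2$-torsion, i.e.\ lives in $H^3(L,\mu_2)$ --- this is also what disposes of your $\mu_2$-versus-$\mu_4^{\otimes 2}$ worry. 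A second application of Theorem \ref{symbol} and Proposition \ref{pfister} produces $M/L$ with $\alpha=N_{M/L}(\beta)$ and $R(\eta)_M=0$ exactly. Corollary \ref{nrd} is then applied over $M$ to $\beta$ (not over $F$ to $\lambda$), Lemmas \ref{Hyp-Nrd(C)} and \ref{HE} give $\beta\in Hyp(h_M)$, and finally $\lambda=N_{L/F}(N_{M/L}(\beta))\in Hyp(h)$ by transitivity of norms. So you have correctly assembled all the peripheral pieces, but the central mechanism --- the norm tower built from Theorem \ref{symbol} and Proposition \ref{pfister} that converts the mod-$(F^*\cup(A))$ information into exact vanishing --- is absent, and as written your argument does not close.
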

\begin{proof}
$\deg(A)= 2m$, where $m=2r$ is an even integer. Let $h$ be the associated hermitian form over $(D,\tau)$ where, $D$ is a central division algebra with an orthogonal involution $\tau,$ and $A$ is a matrix algebra over $D.$ As, $\exp(A)\leq 2$ by corollary \ref{ind}, $\ind(A)\leq 4.$ Then $D$ is some quaternion algebra $Q$ over $F$ or, some biquaternion division algebra $H$ over $F,$ and $(A,\sigma)\cong (M_{2r}(Q),\tau_h)$ or, $(A,\sigma)\cong (M_r(H),\tau_h)$ accordingly. 
\medskip

 The Rost invariant $R(h)$ can be defined and\\ $R(h)=(R(\eta))\in H^3(F, \mu_4^{\otimes 2})/(H^1(F,\mu_2)\cup (A)).$ As $R(\eta)\in H^3(F, \mu_4^{\otimes 2}),$ it implies that, $2\cdot R(\eta)_L\in H^3(F,\mu_2).$ As, $H^3(F,\mathbb{Z}/2)$ consists of symbols. So, there exists $a,b,c\in F^*$ such that $2\cdot R(\eta)=(a)\cup (b) \cup (c),$ where $(a), (b), (c)$ denote their respective classes in $F^*/F^{*2}.$ Consider the $3$-fold Pfister form $q=\langle 1, -a \rangle\cdot \langle 1, -b \rangle\cdot \langle 1, -c \rangle \in I^3(F).$  Now, Proposition \ref{pfister} implies that, $F^*=G(q)=Hyp(q).$
\medskip

Consider $\lambda\in G_+(h)\subseteq F^*=G(q)=Hyp(q).$ Hence, there exists a finite field extension $L/F$ such that $q_L=0$ and $\lambda= N_{L/F}(\alpha),$ for some $\alpha\in L^*.$ Hence, $2\cdot R(\eta)_L=0$ in $H^3(L, \mu_2)$ and $R(\eta)\in  H^3(L,\mu_2).$  As, $H^3(L,\mu_2)$ consists of symbols, there exits $e,f,g\in L^*$ such that $R(\eta)_L=(e)\cup (f) \cup (g),$ where $(e), (f), (g)$ denote their respective classes in $L^*/L^{*2}.$ Consider the $3$-fold Pfister form $q_1=\langle 1, -e \rangle\cdot \langle 1, -f \rangle\cdot \langle 1, -g \rangle \in I^3(L).$  Now, Proposition \ref{pfister} implies that, $L^*=G(q_1)=Hyp(q_1).$ So, there exist a finite extension $M/L$ such that $(q_1)_M=0$ and $\alpha= N_{M/L}(\beta),$ for some $\beta\in M^*.$ Hence, $R(\eta)_M=0.$  Also note that $\lambda= N_{L/F}(\alpha),$ for some $\alpha\in L^*,$ and $\alpha= N_{M/L}(\beta),$ for some $\beta\in M^*.$
\medskip

On the other side, as $\lambda\in G_+(h),$ we have, $\langle 1, -\lambda \rangle\cdot h$ is hyperbolic and $R(\langle 1, -\lambda \rangle\cdot h)=0$ in $H^3(F,\mathbb{Z}/2)/(F^*\cup [A]).$ Hence, $(\lambda)\cup [C_+(h)]=(x)\cup [A],$ for some $x\in F^*.$ As $R(h)_M=0,$ we have 
$(\beta)\cup [C_+(h)_M]=0$ in $H^3(F, \mu_4^{\otimes 2}).$  As $\exp(C_+(h))= 2,$ Theorem \ref{ind}, tells that $\ind(C_+(h))\leq 4.$ Thus using Corollary \ref{nrd}, we obtain that $\beta\in \Nrd(C_+(h)_M).$ By combining the above Lemma \ref{Hyp-Nrd(C)}, alongwith Lemma \ref{HE} it follows that $\beta\in Hyp(h_M).$ So, $\lambda=N_{L/F}(N_{M/L}(\beta)).$ Hence, we obtain that $\lambda\in Hyp(h).$
\end{proof}





\begin{proof}[proof of Theorem \ref{Main Theorem}]
A classical adjoint group $G$ is a direct product of groups $R_{L_{i}|F}(G_{i}),$ where $L_{i}|F$ are finite field extensions, $R_{L_{i}|F}$ is the Weil restriction and $G_{i}$ are absolutely simple classical adjoint groups defined over $L_{i},$ (\cite{T}). Further, $G_{i}(L_{i})/R\cong R_{L_{i}|F}(G_{i})(F)/R$ and $R$-equivalence commutes with direct products (\cite{CTS}, pg. 195). Hence it is sufficient to prove the theorem for an absolutely simple classical adjoint group $G$ defined over $F$. By \cite{We} such a $G$ is isomorphic to $\mathbf{PSim}_{+}(A,\sigma),$ for a central simple algebra $A$ with an involution $\sigma$ of either kind over $F.$ The proof now follows by combining the results from Theorem \ref{odd}, and Theorem \ref{even} from this paper.  
\end{proof}
\hspace{50 mm}{\bf Acknowledgments}
\newline

The author is supported by a post-doctoral fellowship provided by the National Board for Higher Mathematics, India, (File no: 0204/27/(5)/2023/R\&d-II/1183), during this work. She expresses her gratitude to Maneesh Thakur for making attention to this question and also for their productive conversations during this work. She would like to thank the Statistics and Mathematics Unit, Indian Statistical Institute, Bangalore center, India  for providing a wonderful research atmosphere and resources.

\bigskip 

\noindent M. Archita\\
E-mail: {\ttfamily architamondal40@gmail.com} \& {\ttfamily archita\_pd@isibang.ac.in}\\
Theoretical Statistics and Mathematics Unit (SMU),\\ Indian Statistical Institute, Bangalore center,\\ Bangalore, Karnataka-560059, India
\vspace{.5cm}


\end{document}